\def\eop{\hfill\rule{2.0mm}{2.0mm}}
\newcommand{\mrow}{r}
\newcommand{\mcol}{s}
\newtheorem{lemma}{Lemma}
\newtheorem{theorem}{Theorem}
\newtheorem{example}{Example}
\newproof{proof}{Proof}
\begin{document}

\title{Matrix Extension with Symmetry and  Construction of Biorthogonal Multiwavelets}

\author[rvt]{Xiaosheng Zhuang\corref{cor1}}
\ead{xzhuang@math.ualberta.ca}
\address[rvt]{Department of Mathematical and Statistical Sciences,
University of Alberta, Edmonton, Alberta, Canada T6G 2G1. }

\cortext[cor1]{Corresponding author, {\tt
http://www.ualberta.ca/$\sim$xzhuang}}


\makeatletter \@addtoreset{equation}{section} \makeatother

\begin{abstract} Let $(\pP,\wt\pP)$ be a pair of  $r \times s$ matrices of Laurent polynomials with
symmetry such that $\pP(z) \wt\pP^*(z)=I_\mrow$ for all $z\in \CC
\bs \{0\}$ and both $\pP$ and $\wt\pP$ have the same symmetry
pattern that is compatible. The biorthogonal matrix extension
problem with symmetry is to find a pair of  $s \times s$ square
matrices $(\pP_e,\wt\pP_e)$ of Laurent polynomials with symmetry
such that $[I_r, \mathbf{0}] \pP_e =\pP$ and
$[I_r,\mathbf{0}]\wt\pP_e=\wt\pP$ (that is, the submatrix of the
first $r$ rows of $\pP_e,\wt\pP_e$ is the given matrix $\pP,\wt\pP$,
respectively), $\pP_e$ and $\wt\pP_e$ are biorthogonal satisfying
$\pP_e(z)\wt\pP_e^*(z)=I_\mcol$ for all $z\in \CC \bs \{0\}$, and
$\pP_e,\wt\pP_e$ have the same compatible symmetry.  In this paper,
we satisfactorily solve this matrix extension problem with symmetry
by constructing the desired pair of  extension matrices
$(\pP_e,\wt\pP_e)$ from the given pair of matrices $(\pP,\wt\pP)$.
 Matrix extension
plays an important role in many areas such as wavelet analysis,
electronic engineering, system sciences, and so on. As an
application of our general results on matrix extension with
symmetry, we obtain a satisfactory algorithm for constructing
 symmetric biorthogonal  multiwavelets by deriving high-pass filters with symmetry from any
given pair of biorthgonal low-pass filters with symmetry. Several
examples of symmetric biorthogonal multiwavelets are provided to
illustrate the results in this paper.
\end{abstract}

\begin{keyword}
Biorthogonal multiwavelets\sep matrix extension\sep filter \sep
filter banks\sep symmetry\sep Laurent polynomials.

\MSC[2000]{42C40, 41A05, 42C15, 65T60}
\end{keyword}


\maketitle \pagenumbering{arabic}

\section{Introduction and Main Result}
The matrix extension problem plays a fundamental role in many areas
such as electronic engineering, system sciences, mathematics, and
etc. To mention only a few references here on this topic, see
\cite{Cen.Cen:2008,Chui.Lian:1995,Cui:2005,Daubechies:book,
Geronimo.Hardin.Massopust:1994,Han:1998,Han:2009:JFAA,Han.Ji:2009,Jiang:2001,Lawton.Lee.Shen:1996,Petukhov:2004,Shen:1998,Vaidyanathan:1992,Youla.Pickel:1984}.
For example, matrix extension is an indispensable tool in the design
of filter banks  in electronic engineering
(\cite{Jiang:2001,Vaidyanathan:1992,Youla.Pickel:1984}) and in the
construction of multiwavelets in wavelet analysis
(\cite{Chui.Lian:1995,Cui:2005,Daubechies:book,Donovan.Geronimo.Hardin:1996,
Geronimo.Hardin.Massopust:1994,Han:1998,Han:2009:JFAA,Ji.Shen:1999,Lawton.Lee.Shen:1996,Petukhov:2004}).
In order to state the biorthogonal matrix extension problem and our
main result on this topic, let us introduce some notation and
definitions first.

Let $\pp(z)=\sum_{k\in \Z} p_k z^k, z\in \CC \bs \{0\}$ be a Laurent
polynomial with complex coefficients $p_k\in \CC$ for all $k\in \Z$.
We say that $\pp$ has {\it symmetry} if its coefficient sequence
$\{p_k\}_{k\in \Z}$ has symmetry; more precisely, there exist $\gep
\in \{-1,1\}$ and $c\in \Z$ such that
\begin{equation}\label{sym:seq}
p_{c-k}=\gep p_k, \qquad \forall\; k\in \Z.
\end{equation}
If $\gep=1$, then $\pp$ is symmetric about the point $c/2$; if
$\gep=-1$, then $\pp$ is antisymmetric about the point $c/2$.
Symmetry of a Laurent polynomial can be conveniently expressed using
a symmetry operator $\sym$ defined by
\begin{equation}\label{sym}
\sym \pp(z):=\frac{\pp(z)}{\pp(1/z)}, \qquad z\in \CC \bs \{0\}.
\end{equation}
When $\pp$ is not identically zero,  it is evident that
\eqref{sym:seq} holds if and only if $\sym \pp(z)=\gep z^c$. For the
zero polynomial, it is very natural that $\sym 0$ can be assigned
 any symmetry pattern; that is, for every occurrence
of $\sym 0$ appearing in an identity in this paper, $\sym 0$  is
understood to take an appropriate choice of $\gep z^c$ for some
$\gep\in \{-1,1\}$ and $c\in \Z$ so that the identity holds. If
$\pP$ is an $r\times s$ matrix of Laurent polynomials with symmetry,
then we can apply the operator $\sym$ to each entry of $\pP$, that
is, $\sym \pP$ is an $r\times s$ matrix such that $[\sym
\pP]_{j,k}:=\sym ([\pP]_{j,k})$, where $[\pP]_{j,k}$ is the
$(j,k)$-entry of the matrix $\pP$. Also
$[\pP]_{j,k:\ell}:=[[\pP]_{j,k},[\pP]_{j,k+1},\ldots,[\pP]_{j,\ell}]$
is a $1\times (\ell-k+1)$ vector.

For two matrices $\pP$ and $\pQ$ of Laurent polynomials with
symmetry, even though all the entries in $\pP$ and $\pQ$ have
symmetry, their sum $\pP+\pQ$, difference $\pP-\pQ$, or product
$\pP\pQ$, if well defined, generally may not have symmetry any more.
This is one of the difficulties for matrix extension with symmetry.
In order for $\pP\pm\pQ$ or $\pP \pQ$ to possess some symmetry, the
symmetry patterns of $\pP$ and $\pQ$ should be compatible. For
example, if $\sym \pP=\sym \pQ$, that is, both $\pP$ and $\pQ$ have
the same symmetry pattern, then indeed $\pP\pm\pQ$ has symmetry and
$\sym(\pP\pm\pQ)=\sym \pP=\sym \pQ$.  In the following, we discuss
the compatibility of symmetry patterns of matrices of Laurent
polynomials. For an $r\times s$ matrix $\pP(z)=\sum_{k\in \Z} P_k
z^k$,  we denote
\begin{equation}\label{Pstar}
\pP^*(z):=\sum_{k\in \Z} P_k^* z^{-k} \quad \mbox{with}
\quad P_k^*:=\ol{P_k}^T,\qquad k\in \Z,
\end{equation}
where $\ol{P_k}^T$ denotes the transpose of the complex conjugate of
the constant matrix $P_k$ in $\CC$. We say that \emph{the symmetry
of $\pP$ is compatible} or \emph{$\pP$ has compatible symmetry}, if
\begin{equation}\label{sym:comp}
\sym \pP(z)=(\sym \pth_1)^*(z) \sym \pth_2(z),
\end{equation}
for some $1 \times r$ and $1\times s$ row vectors $\pth_1$ and
$\pth_2$ of Laurent polynomials with symmetry. For an $r\times s$
matrix $\pP$ and an $s\times t$ matrix $\pQ$ of Laurent polynomials,
we say that $(\pP, \pQ)$  \emph{has mutually compatible symmetry} if
%
\begin{equation}\label{sym:mutual}
\sym \pP(z)=(\sym \pth_1)^*(z) \sym \pth(z) \quad \mbox{and} \quad
\sym \pQ(z)=(\sym \pth)^*(z) \sym \pth_2(z)
\end{equation}
for some $1\times r$, $1\times s$,  $1\times t$ row vectors $\pth_1,
\pth, \pth_2$ of Laurent polynomials with symmetry. If $(\pP,\pQ)$
has mutually compatible symmetry as in \eqref{sym:mutual}, then
their product $\pP \pQ$ has compatible symmetry and in fact $\sym
(\pP\pQ)= (\sym \pth_1)^*\sym \pth_2 $.

For a matrix of Laurent polynomials, another important property is
the support of its coefficient sequence. For $\pP=\sum_{k\in \Z} P_k
z^k$ such that $P_k={\bf0}$ for all $k\in \Z \bs [m,n]$ with
$P_{m}\ne{\bf0}$ and $P_{n}\ne {\bf0}$, we define its coefficient
support to be $\cs(\pP):=[m,n]$ and the length of its coefficient
support to be $|\cs(\pP)|:=n-m$. In particular, we define
$\cs({\bf0}):=\emptyset$, the empty set, and
$|\cs({\bf0})|:=-\infty$. Also, we  use $\coeff(\pP,k):=P_k$ to
denote the coefficient matrix (vector) $P_k$ of $z^k$ in $\pP$.
Throughout this paper, ${\bf0}$ always denotes a general zero matrix
whose size can be determined in the context.  $\mathbf{1}_n$ denotes
the $1\times n$ row vector $[1,\ldots,1]$,

The Laurent polynomials that we shall consider  have their
coefficients in a subfield $\F$ of the complex field $\CC$.
%
%
Several particular examples of such subfields $\F$ are $\F=\Q$ (the
field of rational numbers), $\F=\R$ (the field of real numbers), and
$\F=\CC$ (the field of complex numbers).

%
%

Throughout the paper,   $r$ and $s$ denote two positive integers
such that $1\le r\le s$. Now we generalize the  matrix extension
problem we consider in \cite{Han.Zhuang:MatExt} to the biorthogonal
case as follows: Let $(\pP,\wt\pP)$ be a pair of  $r\times s$
matrices of Laurent polynomials with coefficients in $\F$ such that
$\pP(z) \wt\pP^*(z)=I_{r}$ for all $z\in \CC\backslash\{0\}$, the
symmetry of each $\pP$ and $\wt\pP$ is compatible, and
$\sym\pP=\sym\wt\pP$. Find a pair of
 $s\times s$ square matrices $(\pP_e,\wt\pP_e)$ of Laurent polynomials
with coefficients in $\F$ and with symmetry such that $[I_r,
\mathbf{0}] \pP_e=\pP, [I_r, \mathbf{0}] \wt\pP_e=\wt\pP,$ (that is,
the submatrix of the first $r$ rows of $\pP_e, \wt\pP_e$ is the
given matrix $\pP,\wt\pP$, respectively), the symmetry of $\pP_e$
and $\wt\pP_e$ is compatible, and $\pP_e(z)\wt\pP_e^*(z)=I_{s}$ for
all $z\in \CC \bs \{0\}$. The coefficient support of
$\pP_e,\wt\pP_e$ can be controlled by that of $\pP,\wt\pP$ in some
way.

The above extension problem plays a critical role in wavelet
analysis. The key of wavelet constructions is the so-called
multiresolution analysis (MRA), which contains mainly two parts. One
is on the construction of refinable function vectors that satisfies
certain desired conditions. For example, (bi)orthogonality,
symmetry, regularity, and so on. Another part is on the derivation
of wavelet generators from refinable function vectors obtained in
first part, which should be able to inherit certain properties
similar to their refinable function vectors. From the point of view
of filter banks, the first part corresponds to the design of filters
or filter banks with certain desired properties, while the second
part can be and is formulated as a matrix extension problem given
above. For the construction of  biorthogonal refinable function
vectors (a pair of biorthogonal low-pass filters), the CBC
(\emph{coset by coset}) algorithm proposed in \cite{Han:2001:JAT}
(also see Section~3 for more details) provides a systematic way of
constructing a desirable dual mask from a given primal mask that
satisfies certain conditions. More precisely, given  a mask
(low-pass filter) satisfying the condition that a dual mask exists,
following the CBC algorithm, one can construct a dual mask with any
preassigned orders of sum rules, which is closely related to the
regularity of the refinable function vectors. Furthermore, if the
primal mask has symmetry, then the CBC algorithm also guarantees
that the dual mask has symmetry. Thus, the first part of MRA
corresponding to the construction of biorthogonal multiwavelets is
more or less solved. However, how to derive the wavelet generators
(high-pass filters) with symmetry remains open even for the scalar
case ($r=1$) and this is one of the motivations of this paper. We
shall see that using our extension algorithm, the wavelet generators
do have symmetry once the given refinable function vectors possess
certain symmetry patterns.

Due to the flexibility of biorthogonality $\pP\wt\pP^*=I_r$, the
above extension problem becomes far more complicated than that the
matrix extension problem we considered in \cite{Han.Zhuang:MatExt}.
The difficulty here is not the symmetry patterns of the extension
matrices, but the support control of the extension matrices. Without
considering any issue on support control, almost all results of
Theorems~1 and 2 in \cite{Han.Zhuang:MatExt} can be transferred to
the biorthogonal case without much difficulty. In
\cite{Han.Zhuang:MatExt}, we showed that the length of the
coefficient support of the extension matrix can never exceed the
length of the coefficient support of the given matrix. Yet, for the
extension matrices in the biorthogonal extension case, we can no
longer expect such nice result, that is, in this case, the length of
the coefficient supports of the extension matrices might not  be
controlled by one of the given matrices. Let us present an example
here to show why we might not have such a result.

\begin{example}\label{ex:lengthBioExt}
{\rm Consider two $1\times 3$ vectors of Laurent polynomials
$\pp(z)=[1,0,a(z)]$ and $\wt\pp(z)=[1,\wt a(z),0]$ with
$|\cs(a(z))|>0, |\cs(\wt a(z))|>0$. We have $\pp\wt\pp^*=1$. Let
$\pP_e$ and $\wt\pP_e$ be their extension matrices such that
$\pP_e\wt\pP_e^*=I_3$. Then $\pP_e, \wt\pP_e$ must be of the form:
\[
\pP_e=\left[
        \begin{array}{ccc}
          1 & 0 & a(z) \\
          -b_1(z)\wt a^*(z) & b_1(z) & c_1(z) \\
          -b_2(z)\wt a^*(z) & b_2(z) & c_2(z) \\
        \end{array}
      \right],
\wt\pP_e=\left[
        \begin{array}{ccc}
          1 & \wt a(z) & 0 \\
          -\wt c_1(z)a^*(z) & \wt b_1(z) & \wt c_1(z) \\
          -\wt c_2(z)a^*(z) & \wt b_2(z) & \wt c_2(z) \\
        \end{array}
      \right].
\]

It is easy to show that $\det(\pP_e)=b_1(z)c_2(z)-b_2(z)c_1(z)$.
Since $\pP_e$ is invertible with $\pP_e^{-1}=\wt\pP_e^*$, we know
that $\det(\pP_e)$ must be a monomial. Without loss of generality,
we can assume $b_1(z)c_2(z)-b_2(z)c_1(z)=1$. Using the cofactors of
$\pP_e$, it is easy to show that $\wt\pP_e=(\pP_e^{-1})^*$ must be
of the form:
\[
\wt\pP_e=\left[
        \begin{array}{ccc}
          1 & \wt a(z) & 0 \\
          b_2^*(z)a^*(z) & c_2^*(z)+\wt a(z) a^*(z)b_2^*(z) &  -b_2^*(z) \\
          -b_1^*(z)a^*(z) & -c_1^*(z)-\wt a(z) a^*(z)b_1^*(z) &  b_1^*(z) \\
        \end{array}
      \right].
\]

On the one hand, if $|\cs(b_1(z))|>0$ or $|\cs(b_2(z))|>0$, then we
see that one of the extension matrices will have  support length
exceeding the maximal length of the given columns. One the other
hand, if both $|\cs(b_1(z))|=0$ and $|\cs(b_2(z))|=0$ (in this case,
both $b_1(z)$ and $b_2(z)$ are monomials), then the lengths of the
coefficient support of $c_1(z)$ and $c_2(z)$ in $\wt\pP_e$ must be
comparable with $\wt a^*(z)a(z)$ so that the support length of
$\wt\pP_e$ can be controlled by that of $\pp$ or $\wt\pp$, which in
turn will result in longer support length of $\pP_e$.
 }
\end{example}

The above example shows that it is difficult to control the support
length of the coefficient support of the extension matrices
independently by only one  given vector in the biorthogonal setting.
Nevertheless, we have the following result, which indicate the
lengths of the coefficient support of the extension matrices can be
controlled by the given pair in certain sense.

\begin{theorem}\label{bio:thm:main:1}
Let $\F$ be a subfield of $\CC$. Let $(\pP,\wt\pP)$ be a pair of
$r\times s$ matrices of Laurent polynomials with coefficients in
$\F$ such that the symmetry of each $\pP,\wt\pP$ is compatible:
$\sym\pP=\sym\wt\pP=(\sym\pth_1)^*\sym\pth_2$ for some $1\times r$,
$1\times s$ vectors $\pth_1,\pth_2$ of Laurent polynomials with
symmetry. Moreover, $\pP(z)\wt\pP^*(z)=I_r$ for all $z\in \CC \bs
\{0\}$. Then there exists a pair of $s \times s$ square matrices
$(\pP_e,\wt\pP_e)$ of Laurent polynomials with coefficients in $\F$
such that
\begin{enumerate}
\item[{\rm(i)}] $[I_r,{\bf0}] \pP_e=\pP, [I_r,{\bf0}]\wt\pP_e=\wt\pP$,
that is, the submatrices of the first  $r$ rows of $\pP_e,\wt\pP_e$ are $\pP,\wt\pP$, respectively;

\item[{\rm(ii)}] $\pP_e$ and $\wt\pP_e$ are biorthogonal: $\pP_e(z) \wt\pP_e^*(z)=I_s$ for all $z\in \CC \bs \{0\}$;

\item[{\rm(iii)}] The symmetry of each $\pP_e,\wt\pP_e$ is
compatible: $\sym\pP_e=\sym\wt\pP_e=(\sym\pth)^*\sym\pth_2$ for some
$1\times s$ vector $\pth$ of Laurent polynomials with symmetry.

\item[{\rm{(iv)}}] $\pP_e, \wt\pP_e$ can be represented as:
\begin{equation}\label{cascade:bior}
\begin{aligned}
\pP_e(z)=\pP_J(z)\cdots\pP_1(z),\quad
\wt\pP_e(z)=\wt\pP_J(z)\cdots\wt\pP_1(z),
\end{aligned}
\end{equation}
where $\pP_j,\wt\pP_j, 1\le j\le J$ are $s\times s$ matrices of
Laurent polynomials with symmetry that satisfy
$\pP_j(z)\wt\pP_j^*(z)=I_s$. Moreover, each pair of
$(\pP_{j+1},\pP_j)$ and $(\wt\pP_{j+1},\wt\pP_j)$ has mutually
compatible symmetry for all $j=1,\ldots, J-1$.

\item[{\rm(v)}] If $r=1$, then the coefficient supports of $\pP_e,\wt\pP_e$ are controlled by that of $\pP,\wt\pP$ in the following
sense:
\begin{equation}\label{bio:supp:control}
\begin{small}
\begin{aligned}
\max_{1\le j,k\le s}\{|\cs([\pP_e]_{j,k})|,|\cs([\wt\pP_e]_{j,k})|\}
&\le \max_{1\le\ell\le
s} |\cs([\pP]_\ell)|+\max_{1\le \ell\le s} |\cs([\wt\pP]_\ell)|.\\
\end{aligned}
\end{small}
\end{equation}
\end{enumerate}
\end{theorem}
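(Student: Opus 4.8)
The plan is to reduce the general statement to the case $r=1$ by a peeling induction on the number of rows, and then to settle the $r=1$ case by a length-reducing factorization that maintains biorthogonality and symmetry simultaneously.

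First I would set up the induction on $r$. Assume the theorem for $r=1$, so that from the first-row pair $([\pP]_{1,1:s},[\wt\pP]_{1,1:s})$ — which satisfies $[\pP]_{1,1:s}[\wt\pP]_{1,1:s}^*=[\pP\wt\pP^*]_{1,1}=1$ and inherits compatible symmetry $(\sym\pth)^*\sym\pth_2$ for some $1\times s$ vector $\pth$ — one obtains a biorthogonal pair of $s\times s$ matrices $(\pQ,\wt\pQ)$ with $\pQ\wt\pQ^*=I_s$, compatible symmetry, and first rows equal to those first-row vectors. I would then form the conjugated matrices $\pP\wt\pQ^*$ and $\wt\pP\pQ^*$. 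Since the first rows match, column $1$ of $\pP\wt\pQ^*$ equals $[\pP\wt\pP^*]_{\cdot,1}=e_1$, and row $1$ equals row $1$ of $\pQ\wt\pQ^*=I_s$; hence $\pP\wt\pQ^*=\bigl[\begin{smallmatrix}1&0\\0&\pP''\end{smallmatrix}\bigr]$ and, likewise, $\wt\pP\pQ^*=\bigl[\begin{smallmatrix}1&0\\0&\wt\pP''\end{smallmatrix}\bigr]$ with $\pP'',\wt\pP''$ of size $(r-1)\times(s-1)$. Using $\wt\pQ^*\pQ=I_s$ one computes $(\pP\wt\pQ^*)(\wt\pP\pQ^*)^*=\pP\wt\pP^*=I_r$, which forces $\pP''\wt\pP''^*=I_{r-1}$. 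Applying the inductive hypothesis to $(\pP'',\wt\pP'')$ and multiplying the resulting block-diagonal extensions back on the right by $\pQ,\wt\pQ$ yields $(\pP_e,\wt\pP_e)$; properties (i) and (ii) then follow by a direct block computation, and the cascade in (iv) is assembled from the factors produced in each round.

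The core is therefore the $r=1$ construction, which I would carry out by induction on the total support length $|\cs(\pp)|+|\cs(\wt\pp)|$ of the pair $(\pp,\wt\pp)=([\pP]_{1,1:s},[\wt\pP]_{1,1:s})$. After normalizing the symmetry centers of the entries by monomial diagonal factors, I would group the components according to their symmetry type (symmetric versus antisymmetric about a common center) and construct a single invertible $s\times s$ factor $\pP_1$ with symmetry, together with its dual $\wt\pP_1=(\pP_1^{-1})^*$, so that the transformed pair $(\pp\,\wt\pP_1^*,\,\wt\pp\,\pP_1^*)$ has strictly smaller total support length. Iterating produces the factorization $\pP_e=\pP_J\cdots\pP_1$ and $\wt\pP_e=\wt\pP_J\cdots\wt\pP_1$ down to a pair of constant invertible matrices, which is handled by a symmetric version of Gauss/Householder elimination over the field $\F$ (here it is essential that $\F$ is a field, so the eliminations stay in $\F$). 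Throughout, each factor is chosen so that consecutive pairs $(\pP_{j+1},\pP_j)$ and $(\wt\pP_{j+1},\wt\pP_j)$ have mutually compatible symmetry in the sense of \eqref{sym:mutual}; the product rule recorded after \eqref{sym:mutual} then propagates compatibility up to $\pP_e,\wt\pP_e$, giving (iii) and the mutual-compatibility claim in (iv). The same bookkeeping shows that the conjugation step in the peeling induction preserves compatible symmetry, so that the reduced pair $(\pP'',\wt\pP'')$ again satisfies the hypotheses.

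The hard part is the simultaneous support control in (v). As Example~\ref{ex:lengthBioExt} shows, no reduction can keep the supports of both extension matrices below the length of a single given vector, so I must design the length-reducing factors $\pP_1,\wt\pP_1$ so that the growth of $\cs(\pP_e)$ is governed by $\wt\pp$ while the growth of $\cs(\wt\pP_e)$ is governed by $\pp$, which yields the additive bound $\max_{1\le\ell\le s}|\cs([\pP]_\ell)|+\max_{1\le\ell\le s}|\cs([\wt\pP]_\ell)|$. Keeping these two support budgets balanced at every step while preserving both symmetry \emph{and} biorthogonality — rather than preserving symmetry alone, as in the paraunitary extension of \cite{Han.Zhuang:MatExt} — is the principal obstacle, and is precisely where the biorthogonal case departs from that earlier work.
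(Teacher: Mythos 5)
Your proposal is correct and follows essentially the same route as the paper: an outer peeling induction on $r$ that reduces to the first-row pair (exactly the block-diagonalization $\pQ\pA=\operatorname{diag}(1,\pQ_1)$ in the paper's proof), combined with an inner induction on total support length driven by biorthogonal, symmetry-preserving factor pairs $(\pB,\wt\pB)$ with $\wt\pB=(\pB^{-1})^*$ — this is precisely the content of the paper's Lemma~\ref{lemma:bioDegBy1} built from the constant completions of Lemma~\ref{lemma:v.u.A.B}, finished by the elementary shear $\pB_{K+1},\wt\pB_{K+1}$ that your ``reduce to constant invertible matrices'' stage implicitly contains. Your symmetry normalization by monomial diagonal factors and regrouping matches \eqref{sym:reorganize}, and your balanced-budget argument for the additive bound in (v) is the same mechanism as the paper's support bookkeeping in cases (a)--(c) of Lemma~\ref{lemma:bioDegBy1} and Algorithm~1.
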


For $r=1$, Goh et al. in \cite{Goh.Yap:1998} considered this matrix
extension problem without symmetry. They provided a step-by-step
algorithm for deriving the extension matrices, yet they  did not
concern about the support control of the extension matrices nor the
symmetry patterns of the extension matrices. For $r>1$, there are
only a few results in the literature \cite{Cen.Cen:2008,
Cui:2005:AMC} and most of them  concern only about some very special
cases. The difficulty still comes from the flexibility of the
biorthogonality relation between the given two matrices. In this
paper, we shall mainly consider this matrix extension problem with
symmetry for the biorthogonal case and shall provide an extension
algorithm from which the extension matrices can have both symmetry
and support control as stated in Theorem \ref{bio:thm:main:1}.

 Here is the
structure of this paper. In Section~2, we shall introduce some
auxiliary results, prove Theorem \ref{bio:thm:main:1}, and also
provide a step-by-step algorithm for the construction of the
extension matrices. In Section~3, we shall discuss the applications
of our main result to the construction of symmetric biorthogonal
multiwavelets in wavelet analysis. Examples will be provided to
illustrate our algorithms. Conclusions and remarks shall be given in
the last section.

\section{Proof of Theorem~\ref{bio:thm:main:1} and an  Algorithm}
In this section, we shall prove our main result
Theorem~\ref{bio:thm:main:1} and based on the the proof, we shall
provide a step-by-step extension algorithm for deriving the desired
pair of extesion matrices.

First, let us introduce some auxiliary results. The following lemma
shows that for a pair of constant vector $(\vf,\wt\vf)$ in $\F$, we
can find a pair of biorthogonal matrices $\left(U_{(\vf,\wt\vf)},\wt
U_{(\vf,\wt\vf)}\right)$ such that up to a constant multiplication,
they normalize $\vf,\wt\vf$ to two unit vectors, respectively.

\begin{lemma}\label{lemma:v.u.A.B}
Let $(\vf,\wt\vf)$ be a pair of nonzero $1\times n$ vectors in $\F$.
Then the following statements hold.
\begin{itemize}
\item[{\rm (1)}] If $\vf\wt\vf^*\neq 0$, then there exists a pair of
$n\times n$ matrices $\left(U_{(\vf,\wt\vf)}, \wt
U_{(\vf,\wt\vf)}\right)$ in $\F$ such that
$U_{(\vf,\wt\vf)}=[(\frac{\wt\vf}{\wt c})^*,F]$, $\wt
U_{(\vf,\wt\vf)}=[(\frac{\vf}{c})^*, \wt F]$, and
$U_{(\vf,\wt\vf)}\wt U_{(\vf,\wt\vf)}^*=I_n$, where $F,\wt F$ are
$n\times (n-1)$ constant matrices in $\F$ and $c,\wt c$ are two
nonzero numbers in $\F$ such that $\vf\wt\vf^*=c\overline{\wt c}$.
In this case, $\vf U_{(\vf,\wt\vf)}=c\e_1$ and $\wt\vf \wt
U_{(\vf,\wt\vf)}=\wt c\e_1$.

\item[{\rm (2)}]If $\vf\wt\vf^*= 0$, then there exists a pair of
$n\times n$ matrices $\left(U_{(\vf,\wt\vf)}, \wt
U_{(\vf,\wt\vf)}\right)$ in $\F$ such that
$U_{(\vf,\wt\vf)}=[(\frac{\vf}{\wt c_1})^*, (\frac{\wt\vf}{ c_2})^*,
F]$, $\wt U_{(\vf,\wt\vf)}=[(\frac{\vf}{ c_1})^*, (\frac{\wt\vf}{\wt
c_2})^*, \wt F]$, and $U_{(\vf,\wt\vf)}\wt U_{(\vf,\wt\vf)}^*=I_n$,
where $F,\wt F$ are $n\times (n-2)$ constant matrices  in $\F$ and
$c_1, c_2,\wt c_1,\wt c_2$ are nonzero numbers in $\F$ such that
$\|\vf\|^2=c_1\overline{\wt c_1}$,$\|\wt\vf\|^2=c_2\overline{\wt
c_2}$. In this case, $\vf U_{(\vf,\wt\vf)}=c_1\e_1$ and $\wt\vf \wt
U_{(\vf,\wt\vf)}=c_2\e_2$.
\end{itemize}
\end{lemma}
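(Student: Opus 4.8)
The plan is to read the biorthogonality requirement $U_{(\vf,\wt\vf)}\wt U_{(\vf,\wt\vf)}^*=I_n$ columnwise. Writing the columns of $U_{(\vf,\wt\vf)}$ and $\wt U_{(\vf,\wt\vf)}$ as $u_1,\dots,u_n$ and $\wt u_1,\dots,\wt u_n$, the identity $\wt U_{(\vf,\wt\vf)}^* U_{(\vf,\wt\vf)}=I_n$ (equivalent for square matrices) says precisely that $\wt u_j^* u_k=\delta_{jk}$, i.e. the two column systems form a pair of biorthogonal (dual) bases of $\F^n$ for the Hermitian form $\langle x,y\rangle:=y^*x$, with $u^{\perp}:=\{w:\langle w,u\rangle=0\}$. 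Thus in each case I would first pin down the prescribed columns, check that they already satisfy the required biorthogonality among themselves, and then extend them to full dual bases by working on the complementary subspaces.

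In Case (1), since $\vf\wt\vf^*\ne 0$ lies in $\F$, I would fix nonzero $c,\wt c\in\F$ with $c\overline{\wt c}=\vf\wt\vf^*$ and set $u_1=(\wt\vf/\wt c)^*$, $\wt u_1=(\vf/c)^*$; a direct computation gives $\wt u_1^* u_1=\vf\wt\vf^*/(c\overline{\wt c})=1$ together with $\vf u_1=c$ and $\wt\vf\wt u_1=\wt c$. The remaining columns must satisfy $u_k\in V:=\ker(\vf\,\cdot)$ and $\wt u_k\in \wt V:=\ker(\wt\vf\,\cdot)$ for $k\ge 2$, and these two membership conditions automatically force the mixed relations $\wt u_1^* u_k=\vf u_k/c=0$ and $\wt u_j^* u_1=\overline{\wt\vf\wt u_j}/\overline{\wt c}=0$. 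Hence the problem collapses to producing biorthogonal bases $u_2,\dots,u_n$ of the hyperplane $V$ and $\wt u_2,\dots,\wt u_n$ of the hyperplane $\wt V$, both of dimension $n-1$; the matrices $F=[u_2,\dots,u_n]$ and $\wt F=[\wt u_2,\dots,\wt u_n]$ are then read off.

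The crux is to show that the Hermitian form restricts to a nondegenerate pairing $\wt V\times V\to\F$, for only then do dual bases exist. I would note that $V=\wt u_1^{\perp}$ and $\wt V=u_1^{\perp}$ are hyperplanes, and that a nonzero $x\in V$ paired to zero against every element of $\wt V=u_1^{\perp}$ must, by the double-orthogonal-complement identity for the nondegenerate global form, lie in $\operatorname{span}(u_1)$; but $x\in V=\wt u_1^{\perp}$ with $\wt u_1^* u_1=1$ then forces $x=0$, a contradiction, and symmetrically on the other side. With nondegeneracy in hand, pick any bases $x_2,\dots,x_n$ of $V$ and $y_2,\dots,y_n$ of $\wt V$, form the cross-Gram matrix $G=[\,y_j^* x_k\,]$ (invertible over $\F$ by nondegeneracy), and replace the $\wt V$-basis by its image under $(G^*)^{-1}$ to enforce $\wt u_j^* u_k=\delta_{jk}$; since $G$ and $G^{-1}$ have entries in $\F$, everything stays in $\F$ (here I use that $\F$ is closed under complex conjugation, as for $\Q,\R,\CC$). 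Assembling the columns yields $U_{(\vf,\wt\vf)}\wt U_{(\vf,\wt\vf)}^*=I_n$, $\vf U_{(\vf,\wt\vf)}=c\e_1$, and $\wt\vf\wt U_{(\vf,\wt\vf)}=\wt c\e_1$.

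Case (2) is handled in the same spirit, now prescribing two columns. When $\vf\wt\vf^*=0$ the vectors $\vf,\wt\vf$ are linearly independent (otherwise $\wt\vf=\lambda\vf$ gives $\bar\lambda\|\vf\|^2=0$, forcing $\wt\vf=0$), so choosing nonzero $c_1,\wt c_1,c_2,\wt c_2\in\F$ with $c_1\overline{\wt c_1}=\|\vf\|^2$ and $c_2\overline{\wt c_2}=\|\wt\vf\|^2$ and setting the first two columns of $U_{(\vf,\wt\vf)},\wt U_{(\vf,\wt\vf)}$ as in the statement makes the $2\times 2$ leading block of $\wt U_{(\vf,\wt\vf)}^* U_{(\vf,\wt\vf)}$ equal to $I_2$, the off-diagonal entries vanishing precisely because $\vf\wt\vf^*=0$, while giving $\vf U_{(\vf,\wt\vf)}=c_1\e_1$ and $\wt\vf\wt U_{(\vf,\wt\vf)}=c_2\e_2$. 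Exactly as before, the remaining $n-2$ columns are obtained as dual bases of the two $(n-2)$-dimensional complementary subspaces, whose restricted pairing is again nondegenerate. I expect the main obstacle to be this nondegeneracy-plus-field-membership step, namely verifying that the reduced cross-Gram matrix is invertible over $\F$ and that the dual-basis construction does not leave $\F$, rather than the routine verifications of the explicit biorthogonality identities.
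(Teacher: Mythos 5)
Your proposal is correct, but it takes a noticeably heavier route than the paper's. The paper's proof is a two-line inverse-adjoint argument: in case (1) it picks a basis $\{\vf_2,\ldots,\vf_n\}$ of the orthogonal complement of $\operatorname{span}\{\vf\}$ in $\F^n$, sets $F:=[\vf_2^*,\ldots,\vf_n^*]$ and $U_{(\vf,\wt\vf)}:=[(\frac{\wt\vf}{\wt c})^*,F]$, observes that $U_{(\vf,\wt\vf)}$ is invertible (exactly because $\vf\wt\vf^*\neq 0$ keeps the first column off the hyperplane spanned by the columns of $F$), and then simply defines $\wt U_{(\vf,\wt\vf)}:=\left(U_{(\vf,\wt\vf)}^{-1}\right)^*$; biorthogonality is then automatic, and the prescribed first column $(\frac{\vf}{c})^*$ of $\wt U_{(\vf,\wt\vf)}$ drops out because the columns of $F$ annihilate $\vf$ (case (2) is identical with the complement of $\operatorname{span}\{\vf,\wt\vf\}$). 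Your columnwise construction — pinning down the prescribed columns of \emph{both} matrices, proving nondegeneracy of the pairing restricted to the complementary subspaces via double orthogonal complements, and inverting the cross-Gram matrix — is sound, but it re-proves by hand what the single step $\wt U:=(U^{-1})^*$ gives for free: your nondegeneracy-plus-Gram argument is precisely the entrywise unpacking of the invertibility of $U$. What your version buys is symmetry between the two matrices and an explicit dual-basis structure; what the paper's buys is brevity, since only one matrix is ever constructed and the other is manufactured to be biorthogonal by definition. Two small remarks: in your case (2) the two ``complementary subspaces'' in fact coincide, since biorthogonality forces both $u_k$ and $\wt u_k$ ($k\ge 3$) into $\ker(\vf\,\cdot)\cap\ker(\wt\vf\,\cdot)$ — your argument still works verbatim on that single $(n-2)$-dimensional space, but the phrasing suggests otherwise; and the conjugation-closure of $\F$ (needed so that $\vf\wt\vf^*$, $\|\vf\|^2$ and the constructed matrices stay in $\F$) is used implicitly by the paper as well, so flagging it, as you do, is a point in your favor rather than a defect.
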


\begin{proof} If $\vf\wt\vf^*\neq0$, there exists $\{\vf_2,\ldots,\vf_n\}$  being a basis of the
orthogonal compliment of the linear span of $\{\vf\}$ in $\F^n$. Let
$F:=[\vf_2^*,\ldots,\vf_n^*]$ and
$U_{(\vf,\wt\vf)}:=[(\frac{\wt\vf}{\wt c})^*,F]$. Then
$U_{(\vf,\wt\vf)}$ is invertible. Let $\wt
U_{(\vf,\wt\vf)}:=\left(U_{(\vf,\wt\vf)}^{-1}\right)^*$. It is easy
to show that $U_{(\vf,\wt\vf)}$ and $\wt U_{(\vf,\wt\vf)}$ are the
desired matrices.

If $\vf\wt\vf^*=0$, let $\{\vf_3,\ldots,\vf_n\}$  be a basis of the
orthogonal compliment of the linear span of $\{\vf,\wt\vf\}$ in
$\F^n$. Let $U_{(\vf,\wt\vf)}=[(\frac{\vf}{\wt c_1})^*,
(\frac{\wt\vf}{ c_2})^*,  F]$ with $F:=[\vf_3^*,\ldots,\vf_n^*]$.
Then $U_{(\vf,\wt\vf)}$ and $\wt
U_{(\vf,\wt\vf)}:=\left(U_{(\vf,\wt\vf)}^{-1}\right)^*$ are the
desired matrices. \eop
\end{proof}

Thanks to Lemma~\ref{lemma:v.u.A.B}, we can reduce the support
lengths of a pair $(\pp,\wt\pp)$ of Laurent polynomials with
symmetry by constructing a pair of biorthogonal matrices
$(\pB,\wt\pB)$ of Laurent polynomials with symmetry as stated in the
following lemma.
\begin{lemma}\label{lemma:bioDegBy1}
Let $(\pp,\wt\pp)$ be a pair of $1\times s$ vectors of Laurent
polynomials with symmetry such that $\pp\wt\pp^*=1$ and
$\sym\pp=\sym\wt\pp=\gep z^c[{\bf 1}_{s_1},-{\bf 1}_{s_2},z^{-1}{\bf
1}_{s_3},-z^{-1}{\bf 1}_{s_4}]=:\sym\pth$ for some nonnegative
integers $s_1,\ldots,s_4$ satisfying $s_1+\cdots+s_4=s$ and
$\gep\in\{1,-1\}, c\in \{0,1\}$. Suppose $|\cs(\pp)|>0$. Then there
exist a pair of $s\times s$ matrices $(\pB,\wt\pB)$ of Laurent
polynomials with symmetry such that
\begin{itemize}
\item[{\rm (1)}] $\pB,\wt\pB$ are biorthogonal:
$\pB(z)\wt\pB^*(z)=I_n$;

\item[{\rm (2)}] $\sym\pB=\sym\wt\pB=(\sym\pth)^*\sym\pth_1$ with
$\sym\pth_1=\gep z^c[{\bf 1}_{s_1'},-{\bf 1}_{s_2'},z^{-1}{\bf
1}_{s_3'},-z^{-1}{\bf 1}_{s_4'}]$ for some nonnegative integers
$s_1',\ldots,s_4'$ such that $s_1'+\cdots+s_4'=s$;

\item[{\rm (3)}] the length of the coefficient support of
$\pp$ is reduced by that of $\pB$. $\wt\pB$ does not increase the
length of the coefficient support of $\wt\pp$. That is,
$|\cs(\pp\pB)|\le|\cs(\pp)|-|\cs(\pB)|$ and
$|\cs(\wt\pp\wt\pB)|\le|\cs(\wt\pp)|$.
\end{itemize}
\end{lemma}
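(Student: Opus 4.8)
The plan is to reduce the coefficient support of $\pp$ by constructing a single symmetric paraunitary factor $\pB$ (with its dual $\wt\pB$) that annihilates the extreme-degree coefficients of $\pp$, assembled from the constant matrices supplied by Lemma~\ref{lemma:v.u.A.B} together with a diagonal monomial matrix. First I would record $\cs(\pp)=[m,n]$ with $n-m=|\cs(\pp)|\ge 1$ and $\cs(\wt\pp)=[\wt m,\wt n]$, and extract the boundary coefficient vectors of $\pp$ and $\wt\pp$, namely the nonzero $1\times s$ constant vectors $\vf:=\coeff(\pp,n)$, $\coeff(\pp,m)$ and their tilded counterparts. The prescribed symmetry $\sym\pp=\sym\wt\pp=\gep z^c[\mathbf 1_{s_1},-\mathbf 1_{s_2},z^{-1}\mathbf 1_{s_3},-z^{-1}\mathbf 1_{s_4}]$ forces each entry's coefficient support to be symmetric about one of the two centers $c/2$ (blocks $1,2$) or $(c-1)/2$ (blocks $3,4$), with the block signs dictating how $\coeff(\pp,m)$ is a reflection of $\coeff(\pp,n)$. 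This coupling is the crucial structural fact: a single \emph{symmetric} factor $\pB$ that cancels the top-degree terms will, by the symmetry of $\pB$ and of $\pp$, simultaneously cancel the bottom-degree terms.

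Next I would exploit the biorthogonality $\pp\wt\pp^*=1$. Comparing the highest and lowest powers of $z$ on the two sides of this identity forces the cross terms $\coeff(\pp,n)\,\coeff(\wt\pp,\wt m)^*$ and $\coeff(\pp,m)\,\coeff(\wt\pp,\wt n)^*$ to vanish, and feeding the block-symmetry relation into these identities translates them into an orthogonality-type relation between $\vf$ and the leading vector $\wt\vf$ of $\wt\pp$. Whether the resulting inner product $\vf\wt\vf^*$ is zero or nonzero is exactly the dichotomy separating cases (2) and (1) of Lemma~\ref{lemma:v.u.A.B}. Applying that lemma to $(\vf,\wt\vf)$ yields a pair of constant biorthogonal matrices $(U,\wt U)$ with $U\wt U^*=I_s$ that rotate $\vf,\wt\vf$ onto the first (and, in the degenerate case, second) standard basis vector.

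I would then set $\pB(z):=U\,\Lambda(z)\,V$ and $\wt\pB(z):=\wt U\,\Lambda(z)\,V$, where $\Lambda(z)$ is the diagonal monomial (hence paraunitary) matrix that lowers the degree in the aligned coordinate(s), and $V$ is a constant matrix chosen to reorganize the output into the four-block form. Biorthogonality $\pB\wt\pB^*=I_s$ is then immediate, since $VV^*=I_s$, $\Lambda\Lambda^*=I_s$, and $U\wt U^*=I_s$. For the support claim, $\pp\pB=\pp\,U\,\Lambda\,V$ has its degree-$n$ coefficient $\vf U$ collapsed onto $\e_1$ and pushed down by $\Lambda$, so the top degree drops; because $\pp\pB$ is forced to carry the symmetry pattern $\sym\pth_1$ (as $(\pp,\pB)$ has mutually compatible symmetry), its support is symmetric and the bottom degree rises in step, giving $|\cs(\pp\pB)|\le|\cs(\pp)|-|\cs(\pB)|$. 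The parallel computation with $\wt U$, using that $\Lambda$ acts on the coordinate aligned with $\wt\vf$, shows $\wt\pp\wt\pB$ does not enlarge $\cs(\wt\pp)$.

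The main obstacle is property~(2): arranging the construction so that $\pB$ has exactly the compatible symmetry $(\sym\pth)^*\sym\pth_1$ with $\pth_1$ in the prescribed four-block form. The difficulty is that the two centers $c/2$ and $(c-1)/2$ differ by a half-integer, so the shift $\Lambda(z)$ moves an entry's center by $1/2$ and thereby \emph{migrates} entries between the symmetric/antisymmetric blocks $(s_1,s_2)$ and the half-shifted blocks $(s_3,s_4)$; this is precisely why the output sizes $s_1',\dots,s_4'$ differ from $s_1,\dots,s_4$. The constant matrices $U$ and $V$ must therefore be chosen to act only within each symmetry block, or to interchange blocks of matching type, since combining entries of different symmetry would destroy the compatible symmetry altogether. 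Tracking these block migrations and confirming that one such factor lowers the support by $|\cs(\pB)|$ while leaving the four-block pattern intact is where the bulk of the careful bookkeeping lies.
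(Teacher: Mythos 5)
Your outline reproduces, in essence, cases (a) and (b) of the paper's proof: the paper also extracts the boundary block coefficients $\vf_1,\vf_2$ (and $\wt\vf_1,\wt\vf_2$) from the four-block symmetric form of $\pp,\wt\pp$, derives from $\pp\wt\pp^*=1$ and $|\cs(\pp)|>0$ the relation $\vf_1\wt\vf_1^*=\vf_2\wt\vf_2^*$, feeds the resulting pairs into Lemma~\ref{lemma:v.u.A.B}, and assembles a degree-one symmetric biorthogonal factor $\pB_0$ (see \eqref{bio:B01}) whose entries are exactly the $\frac{1\pm z^{-1}}{2}$ combinations your ``$U\Lambda(z)V$ acting within symmetry blocks'' would have to produce. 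Two local inaccuracies in your sketch: the cancellation of the extreme coefficients is not a consequence of symmetry alone but of the identity $\vf_1\wt\vf_1^*-\vf_2\wt\vf_2^*=0$ (symmetry only guarantees that top and bottom cancel \emph{together}); and $\sym$ of a generic entry $u+vz^{-1}$ of $U\Lambda V$ exists only when $u=\pm v$, which is precisely the special structure of \eqref{bio:B01}, so ``choose $U,V$ block-wise'' is the whole construction, not a detail.

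The genuine gap is the degenerate branch, case (c) of the paper: $\vf_1\wt\vf_1^*=\vf_2\wt\vf_2^*=0$ with one of $\vf_1,\vf_2$ equal to $\mathbf{0}$ (and possibly $\wt\vf_1=\mathbf{0}$ as well). Your dichotomy --- $\vf\wt\vf^*$ zero versus nonzero, matching cases (2) and (1) of Lemma~\ref{lemma:v.u.A.b}\mbox{} Lemma~\ref{lemma:v.u.A.B} --- is too coarse to see this branch, and no factor of the form $U\Lambda(z)V$ with constant $U,V$ and diagonal monomial $\Lambda$ can handle it: after the constant rotation aligns $\vf_1$ with $\e_1$, the monomial shift $z^{-1}$ on that coordinate lowers the top of $\pp$ but shifts the entire coordinate, and since the corresponding dual coordinate of $\wt\pp$ need not vanish at its extremes (indeed $\wt\vf_1$ may be zero while lower-order dual coefficients are not), the shift lengthens $\cs(\wt\pp\wt\pB)$, violating item (3). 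The paper's own Example~\ref{ex:lengthBioExt}, with $\pp=[1,0,a(z)]$ and $\wt\pp=[1,\wt a(z),0]$, is exactly this obstruction. The paper escapes it with a different tool altogether: elementary \emph{shear} (unimodular triangular, not paraunitary) factors $\pB,\wt\pB$ carrying a Laurent polynomial $a(z)$ obtained by long division, with a pivot column $k$ chosen via the relation $\pq\wt\pq^*=0$ so that $|\cs([\pq]_1-a(z)[\pq]_k)|<|\cs([\pq]_k)|$ while $\wt\pB$ does not enlarge $\cs(\wt\pq)$. Note that these shears can have $|\cs(\pB)|=|\cs(a)|>1$, which is what gives statement (3) its asymmetric form and ultimately why Theorem~\ref{bio:thm:main:1}(v) only achieves the additive bound \eqref{bio:supp:control} rather than the clean support control of the orthogonal case. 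Without this case your induction stalls precisely on such inputs, so the proposal as written does not prove the lemma.
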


\begin{proof} We shall only prove  the case that $\sym\pth = [{\bf 1}_{s_1},-{\bf 1}_{s_2},z^{-1}{\bf 1}_{s_3},-z^{-1}{\bf
1}_{s_4}]$. The proofs for other cases are similar. By their
symmetry patterns, $\pp$ and $\wt\pp$ must take the form as follows
with $\ell>0$ and $\coeff(\pp,-\ell)\neq{\bf0}$:
\begin{equation}\label{eq:p}
\begin{aligned}
\pp&=[\vf_1, -\vf_2,  \vg_1, -\vg_2]z^{-\ell} +
[\vf_3,-\vf_4,\vg_3,-\vg_4 ]z^{-\ell+1}
+\sum_{k=-\ell+2}^{\ell-2}\coeff(\pp,k)z^k
\\&+ [\vf_3,\vf_4,\vg_1,\vg_2]z^{\ell-1} +
[\vf_1,\vf_2,\textbf{0},{\bf0}]z^{\ell};
\\
\wt\pp&=[\wt\vf_1, -\wt\vf_2,  \wt\vg_1, -\wt\vg_2]z^{-\wt\ell} +
[\wt\vf_3,-\wt\vf_4,\wt\vg_3,-\wt\vg_4 ]z^{-\wt\ell+1}
+\sum_{k=-\wt\ell+2}^{\wt\ell-2}\coeff(\wt\pp,k)z^k
\\&+ [\wt\vf_3,\wt\vf_4,\wt\vg_1,\wt\vg_2]z^{\wt\ell-1} +
[\wt\vf_1,\wt\vf_2,\textbf{0},{\bf0}]z^{\wt\ell};
\end{aligned}
\end{equation}
Then, either $\|\vf_1\|+\|\vf_2\|\neq0$ or
$\|\vg_1\|+\|\vg_2\|\neq0$.  Considering $\|\vf_1\|+\|\vf_2\|\neq0$,
due to $\pp\wt\pp^*=1$ and $|\cs(\pp)|>0$, we have
$\vf_1\wt\vf_1^*-\vf_2\wt\vf_2^*=0$. Let
$c:=\vf_1\wt\vf_1^*=\vf_2\wt\vf_2^*$. Then there are at most three
cases: (a) $c\neq0$; (b) $c=0$ but both $\vf_1,\vf_2$ are nonzero
vectors; (c) $c=0$ and one of $\vf_1,\vf_2$ is  ${\bf0}$.

Case (a): In this case, we have $\vf_1\wt\vf_1^* \neq 0$ and
$\vf_2\wt\vf_2\neq 0$. By Lemma \ref{lemma:v.u.A.B}, we can
construct two pairs of biorthogonal matrices
$\left(U_{(\vf_1,\wt\vf_1)},\wt U_{(\vf_1,\wt\vf_1)}\right)$ and
$\left(U_{(\vf_2,\wt\vf_2)},\wt U_{(\vf_2,\wt\vf_2)}\right)$ with
respect to the pairs $(\vf_1,\wt\vf_1)$ and $(\vf_2,\wt\vf_2)$ such
that
\[
\begin{aligned}
U_{(\vf_1,\wt\vf_1)}&=\left[\left(\frac{\wt\vf_1}{\wt c_1}\right)^*,
F_1\right], & \wt U_{(\vf_1,\wt\vf_1)}&=\left[\left(\frac{\vf_1}{
c_1}\right)^*, \wt F_1\right], & \vf_1
U_{(\vf_1,\wt\vf_1)}&=c_1\e_1,
& \wt \vf_1 \wt U_{(\vf_1,\wt\vf_1)}&=\wt c_1\e_1, \\
U_{(\vf_2,\wt\vf_2)}&=\left[\left(\frac{\wt\vf_2}{\wt c_1}\right)^*,
F_2\right], &\wt
 U_{(\vf_2,\wt\vf_2)}&=\left[\left(\frac{\vf_2}{ c_1}\right)^*, \wt F_2\right],
 &  \vf_2
U_{(\vf_2,\wt\vf_2)}&=c_1\e_1, &\wt \vf_2 \wt
U_{(\vf_2,\wt\vf_2)}&=\wt c_1\e_1,
\end{aligned}
\]
where $c_1,\wt c_1$ are constants in $\F$ such that
$c=c_1\overline{\wt c_1}$. Define $\pB_0(z),\wt\pB_0(z)$ as follows:
\begin{equation}\label{bio:B01}
\begin{aligned}
\pB_0(z)&=\left[
           \begin{array}{cc|cc|c}
             \frac{1+z^{-1}}{2}(\frac{\wt\vf_1}{\wt c_1})^* & F_1 &  -\frac{1-z^{-1}}{2}(\frac{\wt\vf_1}{\wt c_1})^* & {\bf0} & {\bf 0} \\
             -\frac{1-z^{-1}}{2}(\frac{\wt\vf_2}{\wt c_1})^* & {\bf0} &  \frac{1+z^{-1}}{2}(\frac{\wt\vf_2}{\wt c_1})^* & F_2 & {\bf 0} \\
             \hline
             {\bf0} &  {\bf0}  &  {\bf0}  &  {\bf0}  & I_{s_3+s_4} \\
           \end{array}
         \right],\\
\wt \pB_0(z)&=\left[
           \begin{array}{cc|cc|c}
             \frac{1+z^{-1}}{2}(\frac{\vf_1}{ c_1})^* & \wt F_1 &  -\frac{1-z^{-1}}{2}(\frac{\vf_1}{ c_1})^* & {\bf0}  & {\bf 0} \\
             -\frac{1-z^{-1}}{2}(\frac{\vf_2}{ c_1})^* & {\bf0} &  \frac{1+z^{-1}}{2}(\frac{\vf_2}{ c_1})^* & \wt F_2& {\bf 0} \\
             \hline
             {\bf0} &  {\bf0}  &  {\bf0}  &  {\bf0}  & I_{s_3+s_4} \\
           \end{array}
         \right].
\end{aligned}
\end{equation}
Direct computation shows that $\pB_0(z)\wt\pB_0(z)^*=I_s$  due to
the special structures of the pairs $\left(U_{(\vf_1,\wt\vf_1)},\wt
U_{(\vf_1,\wt\vf_1)}\right)$ and $\left(U_{(\vf_2,\wt\vf_2)},\wt
U_{(\vf_2,\wt\vf_2)}\right)$ constructed by
Lemma~\ref{lemma:v.u.A.B}. The symmetry patterns of $\pp\pB_0$ and
$\wt\pp\wt\pB_0$ satisfies
\[
\sym(\pp\pB_0)=\sym(\wt\pp\wt\pB_0)=[z^{-1},{\bf
1}_{s_1-1},-z^{-1},-{\bf 1}_{s_2-1},z^{-1}{\bf 1}_{s_3},-z^{-1}{\bf
1}_{s_4}].
\]
Moreover, $\pB_0(z)$, $\wt\pB_0(z)$ reduce the lengths of the
coefficient support of $\pp,\wt\pp$ by $1$, respectively.

In fact, due to the above symmetry pattern and the structures of
$\pB_0,\wt\pB_0$, we only need to show that
$\coeff([\pp\pB_0]_j,\ell)=\coeff([\wt\pp\wt\pB_0]_j,\ell)=0$ for
$j=1,s_1+1$. Note that
$\coeff([\pp\pB_0]_j,\ell)=\coeff(\pp,\ell)\coeff([\pB_0]_{:,1},0)=\frac{1}{2\overline{\wt
c_1}}(\vf_1 \wt\vf_1^*-\vf_2\wt\vf_2^*)=0$. Similar computations
apply for other terms. Thus, $|\cs(\pp\pB_0)|<\cs(\pp)$ and
$|\cs(\wt\pp\wt\pB_0)|<|\cs(\wt\pp)|$. Let $E$ be a permutation
matrix such that
\[
\sym(\pp\pB_0) E=\sym(\wt\pp\wt\pB_0) E=[{\bf 1}_{s_1-1},-{\bf
1}_{s_2-1},z^{-1}{\bf 1}_{s_3+1},-z^{-1}{\bf
1}_{s_4+1}]=:\sym\pth_1.
\]
Define $\pB(z)=\pB_0(z)E$ and $\wt\pB(z)=\wt\pB_0(z)E$. Then
$\pB(z)$ and $\wt\pB(z)$ are the desired matrices.

Case (b): In this case, $\vf_1\wt\vf_1^*=\vf_2\wt\vf_2^*=0$ and both
$\vf_1,\vf_2$ are nonzero vectors. We have $\vf_1\vf_1^*\neq 0$ and
$\vf_2\vf_2^*\neq0$. Again, by Lemma~\ref{lemma:v.u.A.B}, we can
construct two pairs of biorthogonal matrices
$\left(U_{(\vf_1,\vf_1)},\wt U_{(\vf_1,\vf_1)}\right)$ and
$\left(U_{(\vf_2,\vf_2)},\wt U_{(\vf_2,\vf_2)}\right)$ with respect
to the pairs $(\vf_1,\vf_1)$ and $(\vf_2,\vf_2)$ such that
\[
\begin{aligned}
U_{(\vf_1,\vf_1)}&=\left[\left(\frac{\vf_1}{\wt c_1}\right)^*,
F_1\right], & \wt U_{(\vf_1,\vf_1)}&=\left[\left(\frac{\vf_1}{
c_0}\right)^*, F_1\right], & \vf_1
U_{(\vf_1,\vf_1)}&=c_0\e_1, \\
U_{(\vf_2,\vf_2)}&=\left[\left(\frac{\vf_2}{\wt c_2}\right)^*,
F_2\right], &\wt
 U_{(\vf_2,\vf_2)}&=\left[\left(\frac{\vf_2}{ c_0}\right)^*,  F_2\right],
 &  \vf_2
U_{(\vf_2,\vf_2)}&=c_0\e_1,
\end{aligned}
\]
where $c_0,\wt c_1,\wt c_2$ are constants in $\F$ such that
$\vf_1\vf_1^*=c_0\overline{\wt c_1}$ and $\vf_2\vf_2^* =
c_0\overline{\wt c_2}$. Let $\pB_0,\wt\pB_0(z)$ be defined as
follows:
\begin{equation}\label{bio:B02}
\begin{aligned}
\pB_0(z)&=\left[
           \begin{array}{cc|cc|c}
             \frac{1+z^{-1}}{2}(\frac{\vf_1}{\wt c_1})^* & F_1 &  -\frac{1-z^{-1}}{2}(\frac{\vf_1}{\wt c_1})^* & {\bf0} & {\bf 0} \\
             -\frac{1-z^{-1}}{2}(\frac{\vf_2}{\wt c_2})^* & {\bf0} &  \frac{1+z^{-1}}{2}(\frac{\vf_2}{\wt c_2})^* & F_2 & {\bf 0} \\
             \hline
             {\bf0} &  {\bf0}  &  {\bf0}  &  {\bf0}  & I_{s_3+s_4} \\
           \end{array}
         \right],\\
\wt \pB_0(z)&=\left[
           \begin{array}{cc|cc|c}
             \frac{1+z^{-1}}{2}(\frac{\vf_1}{  c_0})^* & F_1 &  -\frac{1-z^{-1}}{2}(\frac{\vf_1}{ c_0})^* & {\bf0}  & {\bf 0} \\
             -\frac{1-z^{-1}}{2}(\frac{\vf_2}{  c_0})^* & {\bf0} &  \frac{1+z^{-1}}{2}(\frac{\vf_2}{ c_0})^* &  F_2& {\bf 0} \\
             \hline
             {\bf0} &  {\bf0}  &  {\bf0}  &  {\bf0}  & I_{s_3+s_4} \\
           \end{array}
         \right].
\end{aligned}
\end{equation}
We can show that $\pB_0(z)$ reduces the length of the coefficient
support of $\pp$ by 1, while $\wt\pB_0(z)$ does not increase the
support length of $\wt\pp$. Moreover, similar to case (a), we can
find a permutation matrix $E$ such that
\[
\sym(\pp\pB_0) E=\sym(\wt\pp\wt\pB_0) E=[{\bf 1}_{s_1-1},-{\bf
1}_{s_2-1},z^{-1}{\bf 1}_{s_3+1},-z^{-1}{\bf
1}_{s_4+1}]=:\sym\pth_1.
\]
Define $\pB(z)=\pB_0(z)E$ and $\wt\pB(z)=\wt\pB_0(z)E$. Then
$\pB(z)$ and $\wt\pB(z)$ are the desired matrices.

Case (c): In this case, $\vf_1\wt\vf_1^*=\vf_2\wt\vf_2^*=0$ and one
of $\vf_1$ and $\vf_2$ is nonzero.  Without loss of generality, we
assume that $\vf_1\neq {\bf0}$ and $\vf_2={\bf0}$. Construct a pair
of matrices $\left(U_{(\vf_1,\wt\vf_1)},\wt
U_{(\vf_1,\wt\vf_1)}\right)$ by Lemma~\ref{lemma:v.u.A.B} such that
$\vf_1 U_{(\vf_1,\wt\vf_1)}=c_1 \e_1$ and $\wt\vf_1 \wt
U_{(\vf_1,\wt\vf_1)}=c_2 \e_2$ (when $\wt\vf_1={\bf0}$, the pair of
matrices is given by $(U_{(\vf_1,\vf_1)},\wt U_{(\vf_1,\vf_1)})$).
Extend this pair to a pair of  $s\times s$ matrices $(U,\wt{U})$ by
$U:=\diag\left(U_{(\vf_1,\wt\vf_1)},I_{s_3+s_4}\right)$ and $\wt
U:=\diag\left(\wt U_{(\vf_1,\wt\vf_1)},I_{s_3+s_4}\right)$. Then
$\pp U$ and $\wt\pp \wt U$ must be of the form:
%
\[
\begin{aligned}
\pq:=\pp U&=[c_1,0,\ldots,0, -\vf_2,  \vg_1, -\vg_2]z^{-\ell} +
[\vf_3,-\vf_4,\vg_3,-\vg_4 ]z^{-\ell+1}
\\&+\sum_{k=-\ell+2}^{\ell-2}\coeff(\pq,k)z^k
+ [\vf_3,\vf_4,\vg_1,\vg_2]z^{\ell-1} +
[c_1,0,\ldots,0,\vf_2,\textbf{0},{\bf0}]z^{\ell};\\
\end{aligned}
\]
\[
\begin{aligned}
\wt\pq:=\wt\pp \wt U&=[0,c_2,\ldots,0, -\wt\vf_2,  \wt\vg_1,
-\wt\vg_2]z^{-\wt\ell} + [\wt\vf_3,-\wt\vf_4,\wt\vg_3,-\wt\vg_4
]z^{-\wt\ell+1}
\\&+\sum_{k=-\wt\ell+2}^{\wt\ell-2}\coeff(\wt\pq,k)z^k
+ [\wt\vf_3,\wt\vf_4,\wt\vg_1,\wt\vg_2]z^{\wt\ell-1} +
[0,c_2,\ldots,0,,\wt\vf_2,\textbf{0},{\bf0}]z^{\wt\ell};
\end{aligned}
\]
%
If $[\wt\pq]_1\equiv 0$, we choose $k$ such that $k = \mathrm{arg}
\min _{\ell\neq 1}\{|\cs([\pq]_1)|-|\cs([\pq]_\ell)|\}$, i.e., $k$
is an integer such that  the length of coefficient support of
$|\cs([\pq]_1)|-|\cs([\pq]_k)|$ is minimal among those of all
$|\cs([\pq]_1)|-|\cs([\pq]_\ell)|$, $\ell=2,\ldots,s$; otherwise,
due to $\pq\wt\pq^* =0$, there must exist $k$ such that
\[
|\cs([\pq]_1)|-|\cs([\pq]_k)|\le \max_{2\le j\le
s}|\cs([\wt\pq]_j)|-|\cs([\wt\pq]_1)|,
\]
($k$ might not be unique, we can choose one of such $k$ so that
$|\cs([\pq]_1)|-|\cs([\pq]_k)|$ is minimal among all
$|\cs([\pq]_1)|-|\cs([\pq]_\ell)|, \ell =2,\ldots,s$).

For such $k$ (in the  case of either  $[\wt\pq]_1=0$ or
$[\wt\pq]_1\neq 0$), define two matrices $\pB(z),\wt\pB(z)$ as
follows:
\[
\pB(z)=\left[
         \begin{array}{cccc|c}
           1 & 0 &\cdots  & 0 &  \\
           0 & 1 & \cdots & 0 &  \\
           \vdots & \vdots & \ddots &  \vdots&  \\
           -a(z) & 0 &\cdots  & 1 &  \\
           \hline
            &  &  &  & I_{s-k} \\
         \end{array}
       \right],
\wt\pB(z)=\left[
         \begin{array}{cccc|c}
           1 & 0 &\cdots  & a(z)^* &  \\
           0 & 1 & \cdots & 0 &  \\
           \vdots & \vdots & \ddots &  \vdots&  \\
          0 & 0 &\cdots  & 1 &  \\
           \hline
            &  &  &  & I_{s-k} \\
         \end{array}
       \right],
\]
where $a(z)$ in $\pB(z),\wt\pB(z)$ is a Laurent polynomial with
symmetry such that $\sym a(z)=\sym([\pq]_1)/\sym([\pq]_k)$,
$|\cs([\pq]_1-a(z)[\pq]_k)|<|\cs([\pq]_k)|$,  and
$|\cs([\wt\pq]_k-a(z)^*[\wt\pq]_1)|\le\max_{1\le\ell\le
s}|\cs([\wt\pq]_\ell)|$. Such $a(z)$ can be easily obtained by long
division.

It is straightforward  to show that $\pB(z)\wt\pB^*(z)=I_s$.
$\pB(z)$ reduces the length of the coefficient support of $\pq$ by
that of $a(z)$ due to $|\cs([\pq]_1-a(z)[\pq]_k)|<|\cs([\pq]_k)|$.
And by our choice of $k$, $\wt\pB(z)$ does not increase the length
of the coefficient support of $\wt\pq$. Moreover, the symmetry
patterns of both $\pq$ and $\wt\pq$ are preserved.

In summary, for all cases (a), (b), and (c), we can always find a
pair of biorthogonal matrices $(\pB,\wt\pB)$ of Laurent polynomials
such that $\pB$ reduces the length of the coefficient support of
$\pp$ while $\wt\pB$ does not increase the length of the coefficient
support of $\wt\pp$.

For $\|\vf_1\|+\|\vf_2\|=0$, we must have $\|\vg_1\|+\|\vg_2\|\neq
0$. The discussion for this case is similar to above. We can find
two matrices $\pB(z),\wt\pB(z)$ such that all items in the lemma
hold. In the case that
$\vg_1\wt\vg_1^*=\vg_2\wt\vg_2^*=c_1\overline{\wt c_1}\neq 0$, the
pair $(\pB_0(z),\wt\pB_0(z))$ similar to \eqref{bio:B01} is of the
form:
\begin{equation}\label{bio:B02}
\begin{aligned}
\pB_0(z)&=\left[
           \begin{array}{c|cc|cc}
            I_{s_1+s_2} &  {\bf0}  &  {\bf0}  &  {\bf0}  &  {\bf0} \\
            \hline
             {\bf 0}& \frac{1+z}{2}(\frac{\wt\vg_1}{\wt c_1})^* & G_1 &  -\frac{1-z}{2}(\frac{\wt\vg_1}{\wt c_1})^* & {\bf0}  \\
              {\bf 0}&-\frac{1-z}{2}(\frac{\wt\vg_2}{\wt c_1})^* & {\bf0} &  \frac{1+z}{2}(\frac{\wt\vg_2}{\wt c_1})^* & G_2  \\
               \end{array}
         \right],\\
\wt \pB_0(z)&=\left[
           \begin{array}{c|cc|cc}
                I_{s_1+s_2} &  {\bf0}  &  {\bf0}  &  {\bf0}  &  {\bf0} \\
            \hline
              {\bf 0}&\frac{1+z}{2}(\frac{\vg_1}{ c_1})^* & \wt G_1 &  -\frac{1-z}{2}(\frac{\vg_1}{ c_1})^* & {\bf0}   \\
              {\bf 0}&-\frac{1-z}{2}(\frac{\vg_2}{ c_1})^* & {\bf0} &  \frac{1+z}{2}(\frac{\vg_2}{ c_1})^* & \wt G_2 \\
                    \end{array}
         \right].
\end{aligned}
\end{equation}
The pairs  for other cases  can be obtained similarly. We are done.
\eop
\end{proof}

Let $\pth$ be a $1\times n$ row vector of Laurent polynomials with
symmetry such that $\sym\pth=[\gep_1 z^{c_1}, \ldots, \gep_n
z^{c_n}]$ for some $\gep_1, \ldots, \gep_n\in \{-1,1\}$ and $c_1,
\ldots, c_n\in \Z$.  Then, the symmetry of any entry in the vector
$\pth \mbox{diag}(z^{-\lceil c_1/2\rceil}, \ldots, z^{-\lceil
c_n/2\rceil})$ belongs to $\{ \pm 1, \pm z^{-1}\}$. Thus, there is a
permutation matrix $E_\pth$ to regroup these four types of
symmetries together so that
\begin{equation}\label{sym:reorganize}
\sym(\pth \pU_{\sym\pth})=[\mathbf{1}_{n_1}, -\mathbf{1}_{n_2},
z^{-1} \mathbf{1}_{n_3}, -z^{-1}\mathbf{1}_{n_4}],
\end{equation}
where $\pU_{\sym\pth}:= \mbox{diag}(z^{-\lceil c_1/2\rceil}, \ldots,
z^{-\lceil c_n/2\rceil})E_\pth$  and  $n_1$, $\ldots$, $n_4$ are
nonnegative integers uniquely determined by $\sym\pth$.

For an $r\times s$ matrix $\pP$ of Laurent polynomials with
compatible symmetry as in \eqref{sym:comp}, it is easy to see that
$\pQ:=\pU_{\sym \pth_1}^*\pP \pU_{\sym \pth_2}$ has the symmetry
pattern as follows.
\begin{equation}\label{bio:Q:sym:standard}
\sym \pQ=[ {\bf1}_{ r_1},-{\bf1}_{ r_2 }, z{\bf1}_{ r_3 },
-z{\bf1}_{ r_4} ]^T[{\bf1}_{ s_1},-{\bf1}_{ s_2},  z^{-1}{\bf1}_{
s_3},-z^{-1}{\bf1}_{ s_4}].
\end{equation}
Note that $\pU_{\sym\pth_1}$ and $\pU_{\sym\pth_2}$ do not increase
the length of the coefficient support of $\pP$.

 Now, we can prove Theorem~\ref{bio:thm:main:1}
using Lemma~\ref{lemma:bioDegBy1}.
\begin{proof}[Proof of Theorem~\ref{bio:thm:main:1}] First, we
normalize the symmetry patterns of $\pP$ and $\wt\pP$ to the
standard form as in \eqref{bio:Q:sym:standard}. Let $\pQ :=
\pU_{\sym\pth_1}^*\pP\pU_{\sym\pth_2}$ and $\wt\pQ :=
\pU_{\sym\pth_1}^*\wt\pP\pU_{\sym\pth_2}$ (given $\pth$,
$\pU_{\sym\pth}$ is obtained by \eqref{sym:reorganize}). Then the
symmetry of each row of $\pQ$ or $\wt\pQ$ is of the form $\gep
z^c[{\bf1}_{ s_1},-{\bf1}_{ s_2}, z^{-1}{\bf1}_{
s_3},-z^{-1}{\bf1}_{ s_4}]$ for some $\gep\in\{-1,1\}$ and
$c\in\{0,1\}$.

Let $\pp:=[\pQ]_{1,:}$ and $\wt\pp:=[\wt\pQ]_{1,:}$ be the first row
of $\pQ,\wt\pQ$, respectively. Applying Lemma~\ref{lemma:bioDegBy1}
recursively, we can find pairs of biorthogonal matrices of Laurent
polynomials  $(\pB_1,\wt\pB_1)$, ..., $(\pB_K,\wt\pB_K)$ such that
$\pp\pB_1\cdots\pB_K=[1,0,\ldots,0]$ and
$\wt\pp\wt\pB_1\cdots\wt\pB_K=[1,\pq(z)]$ for some $1\times (s-1)$
vector of Laurent polynomials with symmetry. Note that by
Lemma~\ref{lemma:bioDegBy1}, all pairs $(\pB_j,\pB_{j+1})$ and
$(\wt\pB_j,\wt\pB_{j+1})$ for $j=1,\ldots,K-1$ have mutually
compatible symmetry.  Now construct $\pB_{K+1}(z),\wt\pB_{K+1}(z)$
as follows:
\[
\pB_{K+1}(z)=\left[
               \begin{array}{cc}
                 1 & 0 \\
                 \pq^*(z) & I_{s-1} \\
               \end{array}
             \right],  \,\wt\pB_{K+1}(z)=\left[
               \begin{array}{cc}
                 1 & -\pq(z) \\
                {\bf0} & I_{s-1} \\
               \end{array}
             \right].
\]
 $\pB_{K+1}$ and $\wt\pB_{K+1}$ are
biorthogonal. Let $\pA:=\pB_1\cdots\pB_K\pB_{K+1}$ and
$\wt\pA:=\wt\pB_1\cdots\wt\pB_K\wt\pB_{K+1}$. Then
$\pp\pA=\wt\pp\wt\pA=\e_1$.

Note that $\pQ\pA$ and $\wt\pQ\wt\pA$ are of the forms:
\[
\pQ\pA=\left[
         \begin{array}{cc}
           1 & {\bf0} \\
           {\bf0} & \pQ_1(z) \\
         \end{array}
       \right],\,
\wt\pQ\wt\pA=\left[
         \begin{array}{cc}
           1 & {\bf0} \\
           {\bf0} & \wt\pQ_1(z) \\
         \end{array}
       \right]
\]
for some $(r-1)\times s$ matrices $\pQ_1,\wt\pQ_1$ of Laurent
polynomials with symmetry. Moreover, due to
Lemma~\ref{lemma:bioDegBy1}, the symmetry patterns of $\pQ_1$ and
$\wt\pQ_1$ are compatible and satisfies $\sym\pQ_1=\sym\wt\pQ_1$.
The rest of the proof is completed by employing the standard
procedure of induction. \eop
\end{proof} 

According to the proof of  Theorem~\ref{bio:thm:main:1}, we have an
extension algorithm for Theorem~\ref{bio:thm:main:1}. See
Algorithm~1.
\begin{algorithm}[H]\label{bio:alg:1}
\caption{Biorthogonal Matrix Extension with Symmetrty}
\begin{algorithmic}[1]
\item[{\rm(a)}] {\bf Input}: $\pP,\wt\pP$ as in Theorem \ref{bio:thm:main:1} with
$\sym\pP=\sym\wt\pP=(\sym\pth_1)^*\sym\pth_2$ for two $1\times r$,
$1\times s$ row vectors $\pth_1$, $\pth_2$ of Laurant polynomials
with symmetry.

\item[{\rm(b)}] {\bf Initialization}:
 Let  $\pQ :=
\pU_{\sym\pth_1}^*\pP\pU_{\sym\pth_2}$ and  $\wt\pQ :=
\pU_{\sym\pth_1}^*\wt\pP\pU_{\sym\pth_2}$. Then both $\pQ$ and
$\wt\pQ$ have the the same symmetry pattern as follows:
\begin{equation}\label{bio:QQ:sym:standard}
\sym \pQ=\sym\wt\pQ=[ {\bf1}_{ r_1},-{\bf1}_{ r_2 }, z{\bf1}_{ r_3
}, -z{\bf1}_{ r_4} ]^T[{\bf1}_{ s_1},-{\bf1}_{ s_2},  z^{-1}{\bf1}_{
s_3},-z^{-1}{\bf1}_{ s_4}],
\end{equation}
where all nonnegative integers $r_1,\ldots, r_4, s_1, \ldots, s_4$
are uniquely determined by $\sym \pP$. Note that this step does not
increase the lengths of the  coefficient  support of both $\pP$ and
$\wt\pP$.

\item[{\rm(c)}]{\bf Support Reduction}:
\STATE Let $\pU_0:=\pU_{\sym\pth_2}^*$ and $\pA=\wt\pA:=I_{s}$.

\FOR{$k = 1$ to $r$}

\STATE Let $\pp:=[\pQ]_{k,k:s}$ and $\wt\pp:=[\wt\pQ]_{k,k:s}$.

\WHILE{$|\cs(\pp)|>0$ {\rm\texttt{and}} $|\cs(\wt\pp)|>0$} \STATE
Construct a pair of biorthogonal matrices
$\left(\pB(z),\wt\pB(z)\right)$ with respect to the pair $(\pp,
\wt\pp)$ by Lemma~\ref{lemma:bioDegBy1} such that
\[
|\cs(\pp\pB)|+|\cs(\wt\pp\wt\pB)|<|\cs(\pp)|+|\cs(\wt\pp)|.
\]

\STATE Replace $\pp,\wt\pp$ by $\pp\pB$, $\wt\pp\wt\pB$,
respectively. \STATE Set $\pA:=\pA\diag(I_{k-1},\pB)$ and
$\wt\pA:=\wt\pA \diag(I_{k-1},\wt\pB)$. \ENDWHILE

\STATE The pair $(\pp,\wt\pp)$ is  of the form:
$([1,0,\ldots,0],[1,\pq(z)])$ for some $1\times(s-k)$ vector of
Laurent polynomials $\pq(z)$. Construct $\pB(z),\wt\pB(z)$ as
follows:
\[
\pB(z)=\left[
               \begin{array}{cc}
                 1 & 0 \\
                 \pq^*(z) & I_{s-k} \\
               \end{array}
             \right], \wt\pB(z)=\left[
               \begin{array}{cc}
                 1 & -\pq(z) \\
                {\bf0} & I_{s-k} \\
               \end{array}
             \right].
\]
\STATE Set $\pA:=\pA\diag(I_{k-1},\pB)$ and $\wt\pA:=\wt\pA
\diag(I_{k-1},\wt\pB)$.

\STATE Set $\pQ:=\pQ\pA$ and $\wt\pQ:=\wt\pQ\wt\pA$.

 \ENDFOR

\item[{\rm(d)}] {\bf Finalization}:
Let $\pU_{1}:=\diag(\pU_{\sym\pth_1},I_{s-r})$. Set
$\pP_e:=\pU_1\pA^*\pU_0$ and $\wt\pP_e:=\pU_1\wt\pA^*\pU_0$.

\item[{\rm(e)}]{\bf Output}: A pair of  desired matrices $(\pP_e,\wt\pP_e)$
satisfying all the properties in Theorem~\ref{bio:thm:main:1}.
\end{algorithmic}
\end{algorithm}
\section{Application to Biorthogonal Multi\-wave\-lets with Symmetry}
In this section, we shall discuss the connection between  matrix
extension and  biorthogonal multiwavelets. We shall also discuss the
application of our results obtained in previous section to the
construction of biorthogonal multiwavelets with symmetry. Several
examples are provided to demonstrate our results.

We say that $\df$ is {\it a dilation factor} if $\df$ is an integer
with $|\df|>1$. Throughout this section, $\df$ denotes a dilation
factor. For simplicity of presentation, we further assume that $\df$
is positive, while multiwavelets and filter banks with a negative
dilation factor can be handled similarly by a slight modification of
the statements in this paper.

Let $\F$ be a subfield of $\CC$. A low-pass filter $\ta_0: \Z
\mapsto \F^{\mphi\times \mphi}$ with multiplicity $\mphi$ is a
finitely supported sequence of $\mphi\times \mphi$ matrices on $\Z$.
The \emph{symbol} of the filter $\ta_0$ is defined to be
$\pa_0(z):=\sum_{k\in \Z} \ta_0(k) z^k$, which is a matrix of
Laurent polynomials with coefficients in $\F$. Let $\df$ be a
dilation factor and $d_1, d_2$ be two fixed number in $\F$ such that
$\df=d_1d_2$ (for instance $d_1=1,d_2=2$ for $\df=2$ if $\F=\Q$).
Let $(a_0,\wt a_0)$ be a pair of low-pass filters with multiplicity
$r$. We say that $(a_0,\wt a_0)$ is a pair of \emph{biorthogonal
$\df$-band filters} if
\begin{equation}\label{mask:biorth}
\sum_{\gamma=0}^{\df-1} \pa_{0;\gamma}(z) \wt\pa_{0;
\gamma}^*(z)=I_\mphi, \qquad z\in \CC \bs \{0\},
\end{equation}
where $\pa_{0;\gamma}$ and $\wt\pa_{0;\gamma}$ are \emph{$\df$-band
subsymbols (polyphases, cosets)}  of $\pa_0$ and $\wt\pa_0$ defined
to be
\begin{equation}\label{bio:def:polyphases}
\begin{array}{l}
\pa_{0;\gamma}(z):=d_1\sum_{k\in\Z}a_0(k+\df k)z^k,\\
\wt\pa_{0;\gamma}(z):=d_2\sum_{k\in\Z}\wt a_0(k+\df k)z^k,
\end{array}
\quad \gamma\in\Z.
\end{equation}
%

Quite often, a low-pass filter $a_0$ is obtained beforehand. To
construction a pair of biorthogonal $\df$-band filters $(a_0,\wt
a_0)$, i.e., \eqref{mask:biorth} holds, one can use the CBC
(\emph{coset-by-coset}) algorithm proposed in \cite{Han:2001:JAT} to
derive $\wt a_0$ from $a_0$. There are two key ingredients in the
CBC algorithm. One is that the CBC algorithm reduces the nonlinear
system in the definition of sum rules for $\wt a_0$ to a system of
linear equations. Another is that the CBC algoirithm reduces the big
system of linear equation of biorthogonality relation for the pair
$(a_0,\wt a_0)$ to a small systems of linear equations in
\eqref{mask:biorth}. Moreover, the CBC algorithm guarantees that for
any given positive integers $\wt \kappa$, there always exists a
finitely supported filter $\wt a_0$ that satisfies the sum rules of
order $\wt\kappa$. For more details on the CBC algorithm, one may
refer to \cite{Han:2001:JAT,Han.Kwon.Zhuang:2009}. In our example
presented in this section, the pairs of biorthogonal $\df$-band
low-pass filters are obtained via this way using the CBC algorithm
(see examples in \cite{Han.Kwon.Zhuang:2009}).

For $f\in L_1(\R)$, the Fourier transform used  is defined to be
$\hat f(\xi):=\int_\R f(x) e^{-i x\xi} dx$ and can be naturally
extended to $L_2(\R)$ functions. For a pair of biorthogonal
$\df$-band filter $(\pa_0,\wt\pa_0)$, we assume that there exist
{\it a pair of biorthogonal $\df$-refinable function vectors}
$(\phi,\wt\phi)$ associated with the pair of biorthogonal $\df$-band
filters $(\pa_0,\wt\pa_0)$. That is,
\begin{equation}\label{F:refeq}
\wh \phi(\df \xi)=\pa_0(e^{-i\xi}) \wh \phi(\xi), \quad \wh
{\wt\phi}(\df \xi)=\wt\pa_0(e^{-i\xi}) \wh{\wt \phi}(\xi)\qquad
\xi\in \R,
\end{equation}
and
\begin{equation}\label{orth:reffunc}
\la \phi(\cdot -k), \wt\phi\ra:=\int_\R \phi(x-k) \ol{\wt\phi(x)}^T
dx=\gd(k) I_\mphi, \qquad k\in \Z,
\end{equation}
where $\gd$ denotes the {\it Dirac sequence} such that $\gd(0)=1$
and $\gd(k)=0$ for all $k\ne 0$.

To construct biorthogonal multiwavelets in $L_2(\R)$, we need to
design high-pass filters $\ta_1, \ldots, \ta_{\df-1}: \Z \rightarrow
\F^{\mphi\times \mphi}$ and $\wt\ta_1,\ldots,\wt\ta_{\df-1}: \Z
\rightarrow \F^{\mphi\times \mphi}$ such that the polyphase matrices
with respect to the filter banks
$\{\pa_0,\pa_1,\ldots,\pa_{\df-1}\}$ and
$\{\wt\pa_0,\wt\pa_1,\ldots,\wt\pa_{\df-1}\}$
\begin{equation}\label{bio:polyphaseMatrix}
\begin{small}
\PP(z)=\left[ \begin{matrix} \pa_{0;0}(z) &\cdots &\pa_{0; \df-1}(z)\\
\pa_{1;0}(z) &\cdots &\pa_{1; \df-1}(z)\\
\vdots &\vdots &\vdots\\
\pa_{\df-1;0}(z) &\cdots &\pa_{\df-1; \df-1}(z)
\end{matrix}\right], \,\wt\PP(z)=\left[ \begin{matrix} \wt\pa_{0;0}(z) &\cdots &\wt\pa_{0; \df-1}(z)\\
\wt\pa_{1;0}(z) &\cdots &\wt\pa_{1; \df-1}(z)\\
\vdots &\vdots &\vdots\\
\wt\pa_{\df-1;0}(z) &\cdots &\wt\pa_{\df-1; \df-1}(z)
\end{matrix}\right]
\end{small}
\end{equation}
are biorthogonal, that is, $\PP(z) \wt\PP^*(z)=I_{\df\mphi}$, where
 $\pa_{m;\gamma}, \wt\pa_{m;\gamma}$ are
 subsymbols of $\pa_m,\wt\pa_m$ defined similar to \eqref{bio:def:polyphases} for
$m,\gamma=0,\ldots,\df-1$, respectively. The pair of filter banks
$(\{\pa_0,\ldots,\pa_{\df-1}\},\{\wt \pa_0,\ldots,\wt
\pa_{\df-1}\})$ satisfying $\PP\wt\PP^*=I_{\df r}$ is called \emph{a
pair of biorthogonal filter banks with the perfect reconstruction
property}.

Symmetry of the filters in a filter bank is a very much desirable
property in many applications. We say that the low-pass filter
$\pa_0$ (or $\ta_0$) has symmetry if
\begin{equation}\label{mask:sym}
\pa_0(z)=\mbox{diag}(\gep_1 z^{\df c_1}, \ldots, \gep_{\mphi} z^{\df
c_{\mphi}}) \pa_0(1/z) \mbox{diag}(\gep_1 z^{-c_1}, \ldots,
\gep_{\mphi} z^{-c_{\mphi}})
\end{equation}
for some $\gep_1, \ldots, \gep_{\mphi}\in \{-1,1\}$ and $c_1,
\ldots, c_{\mphi}\in \R$ such that $\df c_\ell-c_j\in\Z$ for all
$\ell,j=1,\ldots,r$. If $\pa_0$ has symmetry as in \eqref{mask:sym}
and if $1$ is a simple eigenvalue of $\pa_0(1)$, then it is well
known that the $\df$-refinable function vector $\phi$ in
\eqref{F:refeq} associated with the low-pass filter $\pa_0$ has the
following symmetry:
\begin{equation}\label{phi:sym}
\phi_1(c_1-\cdot)=\gep_1 \phi_1,\quad \phi_2(c_2-\cdot)=\gep_2
\phi_2,\quad \ldots, \quad \phi_\mphi(c_\mphi-\cdot)=\gep_\mphi
\phi_\mphi.
\end{equation}
%


Under the symmetry condition in \eqref{mask:sym}, to apply Theorem
\ref{bio:thm:main:1}, we first show that there exists  a suitable
\emph{invertible} matrix $\pU$, i.e., $\det(\pU)$ is a monomial, of
Laurent polynomials in $\F$ acting on
$\pP_{\pa_0}:=[\pa_{0;0},\ldots,\pa_{0;\df-1}]$ so that
$\pP_{\pa_0}\pU$ has compatible symmetry. Note that $\pP_{\pa_0}$
itself may not have compatible symmetry.

\begin{lemma}\label{lemma:symPa}
Let $\pP_{\pa_0}:=[\pa_{0;0},\ldots,\pa_{0;\df-1}]$, where
$\pa_{0;0}, \ldots, \pa_{0; \df-1}$ are $\df$-band subsymbols of a
low-pass filter $\pa_0$ satisfying \eqref{mask:sym}. Then there
exists a $\df r\times \df r$ invertible matrix $\pU$ of Laurent
polynomials with symmetry such that $\pP_{\pa_0}\pU$ has compatible
symmetry.
\end{lemma}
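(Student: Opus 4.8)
The plan is to pass from the symmetry of $\pa_0$ in \eqref{mask:sym} to a symmetry structure on the subsymbol (polyphase) row $\pP_{\pa_0}=[\pa_{0;0},\ldots,\pa_{0;\df-1}]$, and then to build $\pU$ block-by-block, exactly as the regrouping/normalizing operator $\pU_{\sym\pth}$ is built in \eqref{sym:reorganize}. First I would rewrite \eqref{mask:sym} entrywise: the $(j,\ell)$-entry satisfies $\sym[\pa_0]_{j,\ell}=\gep_j\gep_\ell z^{\mu_{j,\ell}}$ with $\mu_{j,\ell}:=\df c_j-c_\ell\in\Z$. Splitting $[\pa_0]_{j,\ell}$ into its $\df$ cosets via \eqref{bio:def:polyphases}, this self-symmetry of the scalar entry becomes a relation between two subsymbol entries in the same row $j$ and the same block $\ell$,
\begin{equation*}
[\pa_{0;\gamma}]_{j,\ell}(z)=\gep_j\gep_\ell\, z^{q_{j,\ell,\gamma}}\,[\pa_{0;\gamma'}]_{j,\ell}(1/z),\qquad \gamma'=(\mu_{j,\ell}-\gamma)\bmod\df,
\end{equation*}
where $q_{j,\ell,\gamma}=\lfloor(\mu_{j,\ell}-\gamma)/\df\rfloor$. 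Thus within each column block $\ell$ the coset index $\gamma$ is paired with $\gamma'$ by an involution $\pi_\ell$.

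The two structural facts that make compatible symmetry attainable are: (a) the pairing is independent of the row $j$, and (b) the $j$-dependence of the exponent and the sign is uniform. For (a) I would observe that the standing hypothesis $\df c_\ell-c_j\in\Z$ forces $c_j-c_{j'}\in\Z$ for all $j,j'$ (indeed $c_j-c_{j'}=(\df c_j-c_{j'})-(\df-1)c_j$ with both terms integers), hence $\df c_j\equiv\df c_{j'}\pmod\df$ and $\mu_{j,\ell}\bmod\df$ does not depend on $j$; write $\pi_\ell$ for the resulting involution $\gamma\mapsto(\df c_1-c_\ell-\gamma)\bmod\df$. For (b), the same integrality gives $q_{j,\ell,\gamma}=(c_j-c_1)+q_{1,\ell,\gamma}$, so the exponent splits into a row part $c_j-c_1$, independent of the column, and a column part; moreover for a genuine two-cycle $\{\gamma,\gamma'\}$ of $\pi_\ell$ one checks $q_{1,\ell,\gamma}=q_{1,\ell,\gamma'}$. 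Consequently $\pP_{\pa_0}$ fails to have compatible symmetry only because the columns sitting in a two-cycle are not individually (anti)symmetric.

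I would then define $\pU$ to be block diagonal across the $r$ column blocks, after the permutation that groups the $\df$ cosets of each $\ell$ together. On a fixed point $\gamma$ of $\pi_\ell$ the column $[\pa_{0;\gamma}]_{:,\ell}$ is already (anti)symmetric, with $\sym[\pa_{0;\gamma}]_{j,\ell}=\gep_j\gep_\ell z^{\,q_{1,\ell,\gamma}+c_j-c_1}$, so only a monomial centering factor $z^{-\lceil\cdot\rceil}$ is applied, as in \eqref{sym:reorganize}. On a two-cycle $\{\gamma,\gamma'\}$ I replace the pair of columns by $[\pa_{0;\gamma}]_{:,\ell}\pm z^{b}[\pa_{0;\gamma'}]_{:,\ell}$; because of (b) a single monomial weight $z^{b}$ symmetrizes every row at once, producing one symmetric and one antisymmetric column whose symmetry again factors as $(\gep_j z^{c_j-c_1})\cdot(\pm\gep_\ell z^{e})$. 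Collecting these $1\times1$ and $2\times2$ blocks together with the regrouping permutation gives $\pU$; its determinant is a nonzero constant times a monomial, so $\pU$ is invertible, and all its entries are monomials, hence $\pU$ has symmetry. Reading off the common row factor $\sym[\pth_1]_j=\gep_j z^{-(c_j-c_1)}$ and the column factors just produced as $\sym\pth_2$ then yields $\sym(\pP_{\pa_0}\pU)=(\sym\pth_1)^*\sym\pth_2$, i.e. compatible symmetry in the sense of \eqref{sym:comp}.

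The step I expect to be the real obstacle is fact (a): that the coset pairing $\pi_\ell$ is the same for every row $j$. Everything else — the sum/difference symmetrization and the determinant bookkeeping — is the scalar construction already used for $\pU_{\sym\pth}$, now applied blockwise. Only once (a) and the uniform-exponent statement (b) are in hand can the column operations defining $\pU$ be carried out consistently for all rows simultaneously; without them the weight $z^{b}$ would have to depend on $j$, which a single right factor $\pU$ cannot supply. I would also dispose of the easy parity cases separately, since an involution of the form $\gamma\mapsto(\df c_1-c_\ell-\gamma)\bmod\df$ on $\{0,\ldots,\df-1\}$ has either none, one, or two fixed points according to the parities of $\df$ and $\df c_1-c_\ell$, but these only affect the list of $1\times1$ versus $2\times2$ blocks and not the argument.
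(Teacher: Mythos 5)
Your proposal is correct and takes essentially the same route as the paper: your coset relation is the paper's \eqref{eq:polyPhaseSymCondition}, your fact (a) is precisely the paper's observation that $Q^\gamma_{\ell,j}$ is independent of the row index (a consequence of $c_\ell-c_j\in\Z$), and your sum/difference columns with a single monomial weight are exactly the paper's symmetrization \eqref{eq:symmetrization}, with compatibility read off from the uniform row shift just as in the paper's final ratio computation. The only harmless deviations are cosmetic: you add a centering/regrouping normalization (which the paper defers to $\pU_{\sym\pth}$ in the algorithm), you make explicit the two-cycle exponent equality $q_{1,\ell,\gamma}=q_{1,\ell,\gamma'}$ (implicit in the paper), and the paper additionally chooses $\kappa_{j,\gamma}$ to minimize coefficient support.
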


\begin{proof}
From \eqref{mask:sym}, we deduce that
%
\begin{equation}\label{eq:polyPhaseSymCondition}
[\pa_{0;\gamma}(z)]_{\ell,j}=\varepsilon_\ell\varepsilon_jz^{R_{\ell,j}^\gamma}[\pa_{0;{Q_{\ell,j}^\gamma}}(z^{-1})]_{\ell,j},\,
\gamma=0,\ldots,\df-1; \ell,j=1,\ldots,r,
\end{equation}
%
where $\gamma,Q_{\ell,j}^\gamma\in \Gamma:=\{0,\ldots,\df-1\}$ and
$R_{\ell,j}^\gamma$, $Q_{\ell,j}^\gamma$ are uniquely determined by
\begin{equation}\label{RQ}
\df c_\ell-c_j-\gamma=\df R_{\ell,j}^\gamma+Q_{\ell,j}^\gamma \quad
\mbox{with} \quad R_{\ell,j}^\gamma\in \Z, \;
Q_{\ell,j}^\gamma\in\Gamma.
\end{equation}
Since $\df c_\ell-c_j\in\Z$ for all $\ell, j=1,\ldots, r$, we have
$c_\ell-c_j\in\Z$ for all $\ell,j=1,\ldots,r$ and therefore,
$Q_{\ell,j}^\gamma$ is independent of $\ell$. Consequently, by
\eqref{eq:polyPhaseSymCondition}, for every $1\le j\le r$, the $j$th
column of the matrix $\pa_{0;\gamma}$ is a flipped version of the
$j$th column of the matrix $\pa_{0;{Q_{\ell,j}^\gamma}}$.
Let $\kappa_{j,\gamma}\in\Z$ be an integer such that
$|\cs([\pa_{0;\gamma}]_{:,j}
+z^{\kappa_{j,\gamma}}[\pa_{0;{Q_{\ell,j}^\gamma}}]_{:,j})|$
is as small as possible. Define
$\pP:=[\pb_{0;0},\ldots,\pb_{0;\df-1}]$ as follows:
\begin{equation}\label{eq:symmetrization}
[\pb_{0;\gamma}]_{:,j}:= \begin{cases}
      [\pa_{0;\gamma}]_{:,j}, & \hbox{$\gamma=Q_{\ell,j}^\gamma;$} \\
       \frac{1}{2}([\pa_{0;\gamma}]_{:,j}+ z^{\kappa_{j,\gamma}}[\pa_{0;{Q_{\ell,j}^\gamma}}]_{:,j}), & \hbox{$\gamma<Q_{\ell,j}^\gamma$;}\\
       \frac{1}{2}([\pa_{0;\gamma}]_{:,j}-z^{\kappa_{j,\gamma}}[\pa_{0;{Q_{\ell,j}^\gamma}}]_{:,j}), & \hbox{$\gamma>Q_{\ell,j}^\gamma$,}
       \end{cases}
\end{equation}
where $[\pa_{0;\gamma}]_{:,j}$ denotes the $j$th column of
$\pa_{0;\gamma}$. Let $\pU$ denote the unique transform matrix
corresponding to \eqref{eq:symmetrization} such that
$\pP:=[\pb_{0;0},\ldots,\pb_{0;\df-1}]=[\pa_{0;0},\ldots,\pa_{0;\df-1}]
\pU$. It is evident that $\pU$ is paraunitary and
$\pP=\pP_{\pa_0}\pU$. We now show that $\pP$ has compatible
symmetry. Indeed, by \eqref{eq:polyPhaseSymCondition} and
\eqref{eq:symmetrization},
\begin{equation}\label{eq:symPatternPb}
[\sym \pb_{0;\gamma}]_{\ell,j}=
\sgn(Q_{\ell,j}^\gamma-\gamma)\gep_\ell\gep_jz^{R_{\ell,j}^\gamma+\kappa_{j,\gamma}},
\end{equation}
where $\sgn(x)=1$ for $x\ge 0$ and $\sgn(x)=-1$ for $x<0$.
By \eqref{RQ} and noting that $Q_{\ell,j}^\gamma$ is independent of
$\ell$, we have
\[
\frac{[\sym \pb_{0;\gamma}]_{\ell,j}}{[\sym \pb_{0;\gamma}]_{n,j}}
=\gep_\ell\gep_nz^{R_{\ell,j}^\gamma-R_{n,j}^\gamma}=
\gep_\ell\gep_nz^{c_\ell-c_n},
\]
for all $1\le \ell,n\le r$,
which is equivalent to saying that $\pP$ has compatible symmetry.
\eop
\end{proof}

Now, for a pair of  biorthogonal $\df$-band  low-pass filters
$(\pa_0,\wt\pa_0)$ with multiplicity $r$ satisfying
\eqref{mask:sym}, we have an algorithm (see Algorithm~2) to
construct high-pass filters $\pa_1,\ldots,\pa_{\df-1}$ and
$\wt\pa_1,\ldots,\wt\pa_{\df-1}$ such that the polyphase matrices
$\PP(z)$ and $\wt\PP(z)$ defined as in \eqref{bio:polyphaseMatrix}
satisfy $\PP(z)\wt\PP^*(z)=I_{\df r}$. Here,
$\pP_{\pa_0}:=[\pa_{0;0},\ldots,\pa_{0;\df-1}]$ and
$\wt\pP_{\wt\pa_0}:=[\wt\pa_{0;0},\ldots,\wt\pa_{0;\df-1}]$ are the
polyphase vectors of $\pa_0,\wt\pa_0$ obtained by
\eqref{bio:def:polyphases}, respectively.

\begin{algorithm}\label{bio:alg:2}
\caption{Construction of Biorthogonal Multiwavelets with Symmetry}
\begin{algorithmic}[1]
\item[{\rm(a)}] {\bf Input}: $(\pa_0,\wt\pa_0)$, a pair of biorthogonal $\df$-band
filters with multiplicity $r$ and with the same symmetry as in
\eqref{mask:sym}.

\item[{\rm(b)}] {\bf Initialization}:
Construct a pair of biorthogonal matrices $(\pU,\wt\pU)$ in $\F$ by
Lemma~\ref{lemma:symPa} such that both $\pP:=\pP_{\pa_0}\pU$ and
$\wt\pP=\wt\pP_{\wt\pa_0}\wt\pU$ ($\wt\pU=(\pU^*)^{-1})$ are
matrices of Laurent polynomials with coefficient in $\F$  having
compatible symmetry:
$\sym\pP=\sym\wt\pP=[\gep_1z^{k_1},\ldots,\gep_r z^{k_r}]^T\sym\pth$
for some $k_1,\ldots,k_r\in\Z$ and some $1\times \df r$ row vector
$\pth$ of Laurent polynomials with symmetry.

\item[{\rm(c)}]{\bf Extension}: Derive $\pP_e, \wt\pP_e$ with all the properties as in
Theorem~\ref{bio:thm:main:1} from $\pP,\wt\pP$ by
Algorithm~\ref{bio:alg:1}.

\item[{\rm(d)}]{\bf High-pass Filters}: Let $\PP:=\pP_e\wt\pU^*=:(\pa_{m;\gamma})_{0\le m,\gamma\le \df-1}$,
$\wt\PP:=\wt\pP_e\pU^*=:(\wt\pa_{m;\gamma})_{0\le m,\gamma\le
\df-1}$
 as  in \eqref{bio:polyphaseMatrix}. For $m=1,\ldots,\df-1$, define high-pass filters
\begin{equation}
\label{highpass:def} \pa_m(z):=\frac{1}{d_1}
\sum_{\gamma=0}^{\df-1}\pa_{m;\gamma}(z^\df)z^\gamma,\quad
\wt\pa_m(z):=\frac{1}{d_2}
\sum_{\gamma=0}^{\df-1}\wt\pa_{m;\gamma}(z^\df)z^\gamma.
\end{equation}

\item[{\rm(e)}]{\bf Output}: a pair of biorthogonal filter banks $(\{ \pa_0, \pa_1, \ldots,
\pa_{\df-1}\},\{ \wt\pa_0, \wt\pa_1, \ldots, \wt\pa_{\df-1}\})$ with
symmetry and with the perfect reconstruction property, i.e. $\PP,
\wt\PP$ in \eqref{bio:polyphaseMatrix} are biorthogonal and all
filters $\pa_m, \wt\pa_m$, $m=1, \ldots, \df-1$, have symmetry:
\begin{equation}\label{bio:highpass:sym}
\begin{array}{c}
\pa_{m}(z)=\diag(\gep^m_1z^{\df c^m_1},\ldots,\gep^m_rz^{\df
c^m_r})\pa_m(1/z)\diag(\gep_1z^{-c_1},\ldots,\gep_{r}z^{-c_r}),
\\
\wt\pa_{m}(z)=\diag(\gep^m_1z^{\df c^m_1},\ldots,\gep^m_rz^{\df
c^m_r})\wt\pa_m(1/z)\diag(\gep_1z^{-c_1},\ldots,\gep_{r}z^{-c_r}),
\end{array}
\end{equation}
where $c^m_\ell:=(k^m_{\ell}-k_\ell)+c_{\ell}\in\R$ and all
$\varepsilon^m_{\ell}\in\{-1,1\}$, $k^m_{\ell}\in\Z$, for
$\ell=1,\ldots,r$ and  $m=1,\ldots,\df-1$, are determined by the
symmetry pattern of $\pP_e$ as follows:
\begin{equation}
\label{bio:sym:Pe} [\gep_1 z^{k_1},\ldots,\gep_r z^{k_r}, \gep^1_1
z^{k^1_1},\ldots,\gep^1_r
z^{k^1_r},\ldots,z^{k^{\df-1}_1},\ldots,\gep^{\df-1}_r
z^{k^{\df-1}_r}]^T\sym\pth:=\sym\pP_e.
\end{equation}

\end{algorithmic}
\end{algorithm}

Let $(\phi,\wt\phi)$ be a pair of biorthogonal $\df$-refinable
function vectors in $L_2(\R)$ associated with a pair of biorthogonal
 $\df$-band filters  $(\pa_0,\wt\pa_0)$ and with $\phi=[\phi_1, \ldots,
\phi_\mphi]^T$, $\wt\phi=[\wt\phi_1, \ldots, \wt\phi_\mphi]^T$.
Define multiwavelet function vectors
$\psi^m=[\psi^m_1,\ldots,\psi^m_r]^T$,
 $\wt\psi^m=[\wt\psi^m_1,\ldots,\wt\psi^m_r]^T$
associated with the high-pass filters $\pa_m, \wt\pa_m$,
$m=1,\ldots,\df-1$, by
\begin{equation}\label{bio:wavelet}
\wh{\psi^m}(\df\xi):=\pa_m(e^{-i\xi}) \wh{\phi}(\xi),\;
\wh{\wt\psi^m}(\df\xi):=\wt\pa_m(e^{-i\xi}) \wh{\wt\phi}(\xi),\,\;
\xi\in \R.
\end{equation}
It is well known that $\{\psi^1, \ldots, \psi^{\df-1}; \wt\psi^1,
\ldots, \wt\psi^{\df-1}\}$ generates a biorthonormal multiwavelet
basis in $L_2(\R)$.

Since the high-pass filters $\pa_1,\ldots,\pa_{\df-1}$,
$\wt\pa_1,\ldots,\wt\pa_{\df-1}$ satisfy \eqref{bio:highpass:sym},
it is easy to verify that each $\psi^m=[\psi^m_1, \ldots,
\psi^m_r]^T$, $\wt\psi^m=[\wt\psi^m_1, \ldots, \wt\psi^m_r]^T$
defined in \eqref{bio:wavelet} also has the following symmetry:
\begin{equation}\label{sym:psi}
\begin{array}{c}
\psi^m_{1}(c^m_{1}-\cdot)=\varepsilon^m_{1}\psi^m_{1},\quad
\psi^m_{2}(c^m_{2}-\cdot)=\varepsilon^m_{2}\psi^m_{2},\quad
\ldots,\quad\psi^m_{r}(c^m_{r}-\cdot)=\varepsilon^m_{r}\psi^m_{r},\\
\wt\psi^m_{1}(c^m_{1}-\cdot)=\varepsilon^m_{1}\wt\psi^m_{1},\quad
\wt\psi^m_{2}(c^m_{2}-\cdot)=\varepsilon^m_{2}\wt\psi^m_{2},\quad
\ldots,\quad
\wt\psi^m_{r}(c^m_{r}-\cdot)=\varepsilon^m_{r}\wt\psi^m_{r}.
\end{array}
\end{equation}

In the following, let us present several examples to demonstrate our
results and illustrate our algorithms.

\begin{example} \label{bio:ex:2}  {\rm
Let $\df=r=2$ 
and $a_0, \wt a_0$ be a pair of dual $\df$-filters with symbols
$\pa_0(z), \wt\pa_0(z)$ (cf. \cite{Han.Kwon.Zhuang:2009}) given by
\[
\begin{aligned}
\pa_0(z)&=\frac{1}{16}\left[ \begin {array}{cc} 8&6\,{z}^{-1}+6\\
\noalign{\medskip}8\,z&-{z}^{-1}+3+3\,z-{z}^{2}\end {array} \right]
,\\
 \wt\pa_0(z)&=\frac{1}{384} \left[ \begin {array}{cc}
-28\,{z}^{-1}+216-28\,z&112\,{z}^{-1}+112
\\ \noalign{\medskip}21\,{z}^{-1}-18+330\,z-18\,{z}^{2}+21\,{z}^{3}&-
36\,{z}^{-1}+60+60\,z-36\,{z}^{2}\end {array} \right].
\end{aligned}
\]
Both $\pa_0$ and $\wt\pa_0$ have the same symmetry pattern and
satisfy \eqref{mask:sym}. Let $\df=d_1d_2$ with $d_1=1$  and
$d_2=2$. Then,
 $\pP_{\pa_0}:=[\pa_{0;0}, \pa_{0;1}]$ and
$\pP_{\pa_0}:=[\pa_{0;0}, \pa_{0;1}]$ are as follows:
\[
\begin{aligned}
\pP_{\pa_0}&=\frac{1}{16}\left[ \begin {array}{cccc}
8&6&0&6\,{z}^{-1}\\ \noalign{\medskip}0&3-z&8&-z^{-1}+3\end {array}
\right],
\\ \wt \pP_{\wt\pa_0}&=
\frac{1}{192}
\left[
\begin {array}{cccc}
216&112&-28(z^{-1}+1)&112{z}^{-1}\\
\noalign{\medskip} -18(1+z)&12(5-3z)&3(7z^{-1}+110+7z)&12(5-3z^{-1})
\end {array}
\right].
\end{aligned}
\]
Let $\pU$ and $\wt\pU$ be defined by
\[
\pU:=\left[
       \begin{array}{cccc}
         1 & 0 & 0 & 0 \\
         0 & 1 & 0 &1 \\
         0 & 0 & 1 & 0 \\
         0 & z & 0 & -z \\
       \end{array}
     \right]
, \wt\pU:=\frac12\left[
       \begin{array}{cccc}
         2 & 0 & 0 & 0 \\
         0 & 1 & 0 &1 \\
         0 & 0 & 2 & 0 \\
         0 & z & 0 & -z \\
       \end{array}
     \right].
\]
Then we have $\pU\wt\pU^*=I_4$. Let $\pP:=\pP_{\pa_0}\pU$ and
$\wt\pP:=\wt\pP_{\wt\pa_0}\pU$. Then we have
$\sym\pP=\sym\wt\pP=[1,z]^T[1,1,z^{-1},-1]$ and $\pP,\wt\pP$ are
given as follows:
\[
\begin{aligned}
\pP&=\frac{1}{8}\left[ \begin {array}{cccc} 4&6&0&0\\
\noalign{\medskip}0&1(1+z)&4&2(1-z)\end {array}
\right],\\
\wt\pP&=\frac{1}{192}\left[ \begin {array}{cccc} 216&112&-28(1+z^{-1})&0\\
\noalign{\medskip} -18(1+z)&12(1+z)&3(7z^{-1}+110+7z)& 48(1-z)\end
{array} \right].
\end{aligned}
\]
Now applying Algorithm~2, we obtain two extension matrices $\pP_e$
and $\wt\pP_e$ as follows:
\[
\begin{aligned}
\pP_e&=\frac{1}{192}\left[ \begin {array}{cccc} 96&144&0&0\\
\noalign{\medskip}0&24(1+z)&96&48(1-z)\\
\noalign{\medskip}-{112}&-3(z^{-1}-70+z)&-12(1+{z}^{-1})&-6(z^{-1}-z)\\
\noalign{\medskip}0&-6(z-z^{-1})&-24(1-{z}^{-1})&12(z+14+{z}^{-1})\end
{array} \right]
, \\
\wt\pP_e& = \frac{1}{192}\left[ \begin {array}{cccc} 216&112&-28(1+z^{-1})&0\\
\noalign{\medskip} -18(1+z)&12(1+z)&3(7z^{-1}+110+7z)& 48(1-z)\\
\noalign{\medskip}-144&96&-24(1+{z}^{-1})&0\\
\noalign{\medskip}0&0&-96(1-{z}^{-1})&192
\end {array} \right]
.
\end{aligned}
\]
Note that $\sym\pP_e=\sym\wt\pP_e=[1,z,1,-1]^T[1,1,z^{-1},-1]$. Now
from the polyphase matrices
$\PP:=\pP_e\wt\pU^*=:(\pa_{m;\gamma})_{0\le m,\gamma\le1}$ and
$\wt\PP:=\wt\pP_e\pU^*=:(\wt\pa_{m;\gamma})_{0\le m,\gamma\le1}$, we
derive two high-pass filters $\pa_1, \wt\pa_1$ as follows:
\[
\begin{aligned}
\pa_1(z)&=\frac{1}{384}\left[ \begin {array}{cc} -8(3z+28+3{z}^{-1})&3({z}^{2}-3z+70 +70z^{-1}-3z^{-2}+z^{-3})\\
\noalign{\medskip}-48(z-{z}^{-1})&6({z
}^{2}-3z+28-28{z}^{-1}+3{z}^{-2}-{z}^{-3})\end {array}
 \right],\\
 \wt\pa_1(z)&=\frac{1}{16}
 \left[ \begin {array}{cc} -(z+6+z^{-1})&4(1+z^{-1})\\
 \noalign{\medskip}-4(z-
{z}^{-1})&8(1-{z}^{-1})\end {array} \right].
\end{aligned}
\]
See Figure~3.1 for the graphs of $\phi=[\phi_1,\phi_2]^T$,
$\psi=[\psi_1,\psi_2]^T$, $\wt\phi=[\wt\phi_1,\wt\phi_2]^T$, and
$\wt\psi=[\wt\psi_1,\wt\psi_2]^T$.

\begin{figure}[th] \label{fig:ex1}
\centerline{\scalebox{0.75}{
\hbox{\epsfig{file=./phi1,width=1.5in,height=1.5in}
\epsfig{file=./phi2,width=1.5in,height=1.5in}
\epsfig{file=./psi1,width=1.5in,height=1.5in}
\epsfig{file=./psi2,width=1.5in,height=1.5in} }}}
\centerline{\scalebox{0.75}{
\hbox{\epsfig{file=./phiDual1,width=1.5in,height=1.5in}
\epsfig{file=./phiDual2,width=1.5in,height=1.5in}
\epsfig{file=./psiDual1,width=1.5in,height=1.5in}
\epsfig{file=./psiDual2,width=1.5in,height=1.5in} }}}
\begin{caption}
{The graphs of $\phi=[\phi_1,\phi_2]^T, \psi=[\psi_1,\psi_2]^T$
(top, left to right), and $\wt\phi=[\wt\phi_1,\wt\phi_2]^T,
\wt\psi=[\wt\psi_1,\wt\psi_2]^T$ (bottom, left to right) in
Example~\ref{bio:ex:2}.}
\end{caption}
\end{figure}

}\end{example}
\begin{example} \label{bio:ex:3}  {\rm

Let $\df=3, r=2$, and $a_0, \wt a_0$ be a pair of dual $\df$-filters
with symbols $\pa_0(z), \wt\pa_0(z)$ (cf.
\cite{Han.Kwon.Zhuang:2009}) given by
\[
\pa_0(z)=\frac{1}{243}\left[ \begin {array}{cc}
a_{11}(z)&a_{12}(z)\\
\noalign{\medskip} a_{21}(z)&a_{22}(z)\end {array} \right] ,\quad
 \wt\pa_0(z)
 =\frac{1}{34884}
\left[ \begin {array}{cc}
 \wt a_{11}(z)&\wt a_{12}(z)\\
\noalign{\medskip}\wt a_{21}(z) & \wt a_{22}(z)\end {array} \right].
\]
where
\[
\begin{aligned}
a_{11}(z)&=-21\,{z}^{-2}+30\,{z}^{-1}+81+14\,z-5\,{z}^{2}\\
a_{12}(z)&=60\,{z}^{-1}+84-4\,{z}^{2}+4\,{z}^{3}\\
a_{21}(z)&=4\,{z}^{-2}-4\,{z}^{-1}+84\,z+60\,{z}^{2}\\
a_{22}(z)&=-5\,{z}^{-1}+14+81\,z+30\,{z}^{2}-21 \,{z}^{3},
\end{aligned}
\] and
\[
\begin{aligned}
\wt a_{11}(z)&=
1292\,{z}^{-2}+2844\,{z}^{-1}+17496+2590\,z-1284\,{z}^{2}+1866\,{z}^{3}\\
\wt a_{12}(z)&=-4773\,{z}^{-2}+9682\,{z}^{-1}+8715-2961
\,z+386\,{z}^{2}-969\,{z}^{3}\\
\wt a_{21}(z)&=-969\,{z}^{-2}+386\,
{z}^{-1}-2961+8715\,z+9682\,{z}^{2}-4773\,{z}^{3},\\
\wt a_{22}(z)&=1866\,{z}^{-2}-1284
\,{z}^{-1}+2590+17496\,z+2844\,{z}^{2}+1292\,{z}^{3}.
\end{aligned}
\]
The low-pass filters $a_0$ and $\wt a_0$ do not satisfy
\eqref{mask:sym}. However, we can employ  a very simple  orthogonal
transform $E:= {\tiny \left[
      \begin{array}{cc}
        1 & 1 \\
        1 & -1 \\
      \end{array}
\right]}$ to $\pa_0,\wt\pa_0$ so that the symmetry in
\eqref{mask:sym} holds. That is, for $\pb_0(z):=E\pa_0(z) E^{-1}$
and $\wt\pb_0(z):=E^{-1}\wt \pa_0(z) E$, it is easy to verify that
$\pb_0$ and $\wt\pb_0$ satisfy \eqref{mask:sym} with $c_1=c_2=1/2$
and $\varepsilon_1=1,\varepsilon_2=-1$. Let $\df=d_1d_2$ with
$d_1=1$ and $d_2=3$. Construct $\pP_{
\pb_0}:=[\pb_{0;0},\pb_{0;1},\pb_{0;2}]$ and $\wt \pP_{\wt
\pb_0}:=[\wt\pb_{0;0},\wt\pb_{0;1},\wt\pb_{0;2}]$ from $\pb_0$ and
$\wt \pb_0$. Let $\pU$ be   given by
\[
\begin{small}
\pU=\left[
     \begin{array}{cccccc}
       z^{-1} & 0 & z^{-1} & 0 & 0 & 0 \\
       0 & z^{-1} & 0 & z^{-1} & 0 & 0 \\
       1 & 0 & -1 & 0 & 0 & 0 \\
       0 & 1 & 0 & -1 & 0 & 0 \\
       0 & 0 & 0 & 0 & 1 & 0 \\
       0 & 0 & 0 & 0 & 0 & 1 \\
     \end{array}
   \right]
\end{small}
\]
and define $\wt\pU:=(\pU^*)^{-1}$. Let $\pP:=\pP_{\pb_0}\pU$ and
$\wt\pP:=\wt\pP_{\wt \pb_0}\wt\pU$. Then we have $
\sym\pP=\sym\wt\pP=[z^{-1},-z^{-1}]^T[1,-1,-1,1,1,-1]$ and
$\pP,\wt\pP$ are given by
\[
\begin{aligned}
\pP&=c\left[
 \begin{array}{cccccc}
t_{11}(1+\frac1z)&t_{12}(1-\frac1z)&t_{13}(1-\frac1z)&t_{14}&t_{15}(1+\frac1z)&t_{16}(1-\frac1z)
\\ \noalign{\vspace{0.05in}}
t_{21}(1-\frac1z)&t_{22}(1+\frac1z)&t_{23}(1+\frac1z)&t_{24}(1-\frac1z)&t_{25}(1-\frac1z)&t_{26}(1+\frac1z)
\end{array}
\right],\\
\wt\pP&=\wt c\left[
 \begin{array}{cccccc}
\wt t_{11}(1+\frac1z)&\wt t_{12}(1-\frac1z)&\wt
t_{13}(1-\frac1z)&\wt t_{14}&\wt t_{15}(1+\frac1z)&\wt
t_{16}(1-\frac1z)
\\ \noalign{\vspace{0.05in}}
\wt t_{21}(1-\frac1z)&\wt t_{22}(1+\frac1z)&\wt
t_{23}(1+\frac1z)&\wt t_{24}(1-\frac1z)&\wt t_{25}(1-\frac1z)&\wt
t_{26}(1+\frac1z)\\
\end{array}
\right],
\end{aligned}
\]
where $c=\frac{1}{486},  \wt  c = \frac{3}{34884}$ and $t_{jk}$'s,
$\wt t_{jk}$'s are constants defined as follows:
\[
\begin{small}
\begin{aligned}
t_{11}&=162;& t_{12}&=34; & t_{13}&=-196;& t_{14}&=0;&
t_{15}&=81;& t_{16}&=29;&\\
t_{21}&=-126;& t_{22}&=-14; & t_{13}&=176;& t_{24}&=-36;&
t_{15}&=-99;& t_{16}&=-31;&\\
\wt t_{11}&=5814;& \wt t_{12}&=-1615; & \wt t_{13}&=-7160;& \wt
t_{14}&=0;&
\wt t_{15}&=5814;& \wt t_{16}&=2584;&\\
\wt t_{21}&=-5551;& \wt t_{22}&=5808; &\wt  t_{13}&=7740;& \wt
 t_{24}&=-1358;&
\wt t_{15}&=-6712;& \wt t_{16}&=-4254.&\\
\end{aligned}
\end{small}
\]
Applying Algorithm~2,  we obtain  $\pP_e$ and $\wt\pP_e$ as follows:
\[
\pP_e=c\left[
 \begin{array}{cccccc}
t_{11}(1+\frac1z)&t_{12}(1-\frac1z)&t_{13}(1-\frac1z)&t_{14}&t_{15}(1+\frac1z)&t_{16}(1-\frac1z)
\\ \noalign{\vspace{0.05in}}
t_{21}(1-\frac1z)&t_{22}(1+\frac1z)&t_{23}(1+\frac1z)&t_{24}(1-\frac1z)&t_{25}(1-\frac1z)&t_{26}(1+\frac1z)\\
\hline
%
%
t_{31}(1+\tfrac1z)&t_{32}(1-\tfrac1z)&t_{33}(1-\tfrac1z)&t_{34}(1+\tfrac1z)&t_{35}(1+\tfrac1z)&t_{36}(1-\tfrac1z)
\\ \noalign{\vspace{0.05in}}
t_{41}&0&0&t_{44}&t_{45}&0\\
\hline
%
%
0 & t_{52} & t_{53} & 0 & 0 & t_{56}\\
t_{61}(1-\frac1z)& t_{62}(1+\frac1z)& t_{63}(1+\frac1z) & t_{64}(1-\frac1z) & t_{65}(1-\frac1z)& t_{66}(1+\frac1z)\\
\end{array}
\right],
\]
where all $t_{jk}$'s are constants given by:
\[
\begin{aligned}
t_{31}&=24;& t_{32}&=\frac{472}{27}; & t_{33}&=-\frac{148}{27};\\
t_{34}&=-36; & t_{35}&=-24; & t_{36}& =
-\frac{112}{27};\\
t_{41}&=\frac{109998}{533}; & t_{44}&=\frac{94041}{533};&
t_{45}&=-\frac{109989}{533};\\
t_{52}&=406c_0; & t_{53}&=323c_0;& t_{56}&=1142c_0;
&c_0&=\frac{1609537}{13122};\\
t_{61}&=24210c_1;& t_{62}&=14318c_1; & t_{63}&=-11807c_1;&
t_{64}&=-26721c_1;\\
t_{65}&=-14616c_1; & t_{66}& =
-1934c_1; &c_1&=200/26163.\\
\end{aligned}
\]
And
\[
\wt\pP_e=\wt c\left[
 \begin{array}{cccccc}
\wt t_{11}(1+\frac1z)&\wt t_{12}(1-\frac1z)&\wt
t_{13}(1-\frac1z)&\wt t_{14}&\wt t_{15}(1+\frac1z)&\wt
t_{16}(1-\frac1z)
\\ \noalign{\vspace{0.05in}}
\wt t_{21}(1-\frac1z)&\wt t_{22}(1+\frac1z)&\wt
t_{23}(1+\frac1z)&\wt t_{24}(1-\frac1z)&\wt t_{25}(1-\frac1z)&\wt
t_{26}(1+\frac1z)\\
\hline
%
%
\wt t_{31}(1+\tfrac1z)&\wt t_{32}(1-\tfrac1z)&\wt
t_{33}(1-\tfrac1z)&\wt t_{34}(1+\tfrac1z)&\wt t_{35}(1+\tfrac1z)&\wt
t_{36}(1-\tfrac1z)
\\ \noalign{\vspace{0.05in}}
\wt t_{41}&0&0&\wt t_{44}&\wt  t_{45}&0\\
\hline
%
%
0 & \wt t_{52} & \wt t_{53} & 0 & 0 &\wt  t_{56}\\
\wt t_{61}(1-\frac1z)& \wt t_{62}(1+\frac1z)& \wt t_{63}(1+\frac1z) & \wt t_{64}(1-\frac1z) &\wt  t_{65}(1-\frac1z)&\wt  t_{66}(1+\frac1z)\\
\end{array}
\right],
\]
where all $\wt t_{jk}$'s are constants given by:
\[
\begin{aligned}
\wt t_{31}&=3483\wt c_0;&\wt  t_{32}&=37427\wt c_0; & \wt
t_{33}&=4342\wt c_0;& \wt t_{34}&=-12222\wt c_0;\\  \wt
t_{35}&=-3483\wt c_0; & \wt t_{36}& =-7267; &
\wt c_0&=\frac{8721}{4264};\\
\wt t_{41}&=5814; & \wt t_{44}&=11628;&
\wt t_{45}&=-11628;\\
\wt t_{52}&=3\wt c_1; &\wt t_{53}&=2\wt c_1;& \wt t_{56}&=10\wt c_1;
&\wt c_1&=\frac{12680011}{243};\\
\end{aligned}
\]
\[
\begin{aligned}
\wt t_{61}&=18203\wt c_2;& \wt t_{62}&=101595\wt c_2; &\wt
t_{63}&=1638\wt c_2;&
\wt t_{64}&=-33950\wt c_2;\\
\wt t_{65}&=-10822\wt c_2; & \wt t_{66}& =
-36582\wt c_2; &\wt c_2&=\frac{26163}{213200}.\\
\end{aligned}
\]
Note that  $\pP_e$ and $\wt\pP_e$ satisfy
\[
\sym\pP_e=\sym\pP_e=[z^{-1},-z^{-1},z^{-1},1,-1,-z^{-1}]^T[1,-1,-1,1,1,-1].
\]From the polyphase matrices $\PP:=\pP_e \wt\pU^*$ and
$\wt\PP:=\wt\pP_e\pU^*$, we derive high-pass filters $\pb_1,\pb_2$
and $\wt\pb_1, \wt \pb_2$ as follows:
\[
 \pb_1(z)=\left[
           \begin{array}{cc}
             b^1_{11}(z) & b^1_{12}(z) \\
              b^1_{21}(z) &  b^1_{22}(z)\\
           \end{array}
         \right],
 \pb_2(z)=\left[
           \begin{array}{cc}
             b^2_{11}(z) &  b^2_{12}(z) \\
              b^2_{21}(z) &  b^2_{22}(z)\\
           \end{array}
         \right],
\]
where
\[
\begin{aligned}
b^1_{11}(z)&={\frac {199}{6561}}+{\frac {125}{6561}}{z}^{3}-{\frac
{4}{81}}{z}^ {2}+{\frac {199}{6561}}z-{\frac {4}{81}}{z}^{-1}+{\frac
{125}{6561 }}{z}^{-2}
;\\
b^1_{12}(z)&=-{\frac {361}{6561}}-{\frac {125}{6561}}{z}^{3}-{\frac
{56}{6561}} {z}^{2}+{\frac {361}{6561}}z+{\frac
{56}{6561}}{z}^{-1}+{\frac { 125}{6561}}{z}^{-2}
;\\
b^1_{21}(z)&={\frac {679}{3198}}{z}^{3}+{\frac {679}{3198}} z-{\frac
{679}{1599}}{z}^{2};
\\
b^1_{22}(z)&={\frac {387}{2132}}{z}^{3}-{\frac {387}{2132}}z
;\\
%
%
b^2_{11}(z)&=c_3(323{z}^{3}-323z);\\
b^2_{12}(z)&=c_3(406{z}^{3}+2284{z}^{2}+406z);\\
b^2_{21}(z)&=c_4(
-36017+12403\,{z}^{3}-29232\,{z}^{2}+36017\,z+29232\,{z}^{-1}-12403\,{z}^{-2});
\\
b^2_{22}(z)&=c_4(41039-12403\,{z}^{3}-3868\,{z}^{2}
+41039\,z-3868\,{z}^{-1}-12403\,{z}^{-2});
\\
 c_3&=\frac{27}{3219074};\,\, c_4=\frac{50}{6357609}.
\end{aligned}
\]
And
\[
 \wt\pb_1(z)=\left[
           \begin{array}{cc}
             \wt b^1_{11}(z) & \wt b^1_{12}(z) \\
              \wt b^1_{21}(z) &  \wt  b^1_{22}(z)\\
           \end{array}
         \right],
\wt \pb_2(z)=\left[
           \begin{array}{cc}
             \wt b^2_{11}(z) &\wt   b^2_{12}(z) \\
             \wt  b^2_{21}(z) & \wt  b^2_{22}(z)\\
           \end{array}
         \right],
\]
where
\[
\begin{aligned}
\wt b^1_{11}(z)&=-{\frac {859}{17056}}+{\frac
{7825}{17056}}\,{z}^{3}-{\frac {3483}{ 8528}}\,{z}^{2}-{\frac
{859}{17056}}\,z-{\frac {3483}{8528}}\,{z}^{-1} +{\frac
{7825}{17056}}\,{z}^{-2}
;\\
\wt b^1_{12}(z)&=-{\frac {49649}{17056}}+{\frac
{25205}{17056}}\,{z}^{3}-{\frac {559}{ 656}}\,{z}^{2}+{\frac
{49649}{17056}}\,z+{\frac {559}{656}}\,{z}^{-1}- {\frac
{25205}{17056}}\,{z}^{-2}
;\\
\wt b^1_{21}(z)&=\frac16({z}^{3}+z-2{z}^{2});
%
\quad\wt b^1_{22}(z)=\frac13({z}^{3}-z)
;\\
%
%
\wt b^2_{11}(z)&=2\wt c_3({z}^{3}-z);\\
\wt b^2_{12}(z)&=\wt c_3(3{z}^{3}+10{z}^{2}+3z);\,\,\wt c_3=\frac{39257}{26244};\\
\wt b^2_{21}(z)&= -{\frac {9939}{170560}}+{\frac
{59523}{852800}}\,{z}^{3}-{\frac {16233}{426400}}\,{z}^{2}+{\frac
{9939}{ 170560}}\,z+{\frac {16233}{426400}}\,{z}^{-1}-{\frac
{59523}{852800}} \,{z}^{-2};
\\
\wt b^2_{22}(z)&={\frac {81327}{170560}}+{\frac
{40587}{170560}}\,{z}^{3}-{ \frac {4221}{32800}}\,{z}^{2}+{\frac
{81327}{170560}}\,z-{\frac {4221} {32800}}\,{z}^{-1}+{\frac
{40587}{170560}}\,{z}^{-2}.
\end{aligned}
\]
Then the high-pass filters $\pb_1,\pb_2$ and $\wt \pb_1, \wt \pb_2$
satisfy \eqref{bio:highpass:sym} with $c^1_1=c^1_2=1/2$,
$\varepsilon^1_1=1, \varepsilon^1_2=1$ and $c^2_1=c^2_2=3/2$,
$\varepsilon^1_1=-1, \varepsilon^1_2=-1$, respectively.

Let $\pa_1,\pa_2$ and $\wt\pa_1,\wt\pa_2$ be high-pass filters
constructed from $\pb_1,\pb_2$ and $\wt \pb_1, \wt \pb_2$ by
$\pa_1(z):=E^{-1}\pb_1(z)E, \pa_2:=E^{-1}\pb_2E$ and
$\wt\pa_1(z):=E\wt\pb_1(z)E^{-1}, \wt\pa_2:=E\wt\pb_2E^{-1}$.

See Figure~3.2 for the graphs of the $3$-refinable function vectors
$\phi,\wt\phi$ associated with the low-pass filters
$\pa_0,\wt\pa_0$, respectively, and the biorthogonal multiwavelet
function vectors $\psi^1,\psi^2$ and $\wt\psi^1,\wt\psi^2$
associated with the high-pass filters $\pa_1,\pa_2$ and
$\wt\pa_1,\wt\pa_2$, respectively. Also, see Figure~3.3 for the
graphs of the $3$-refinable function vectors $\eta, \wt\eta$
associated with the low-pass filters $\pb_0, \wt\pb_0$,
respectively, and the biorthogonal multiwavelet function vectors
$\zeta^1,\zeta^2$ and $\wt\zeta^1, \wt\zeta^2$ associated with the
high-pass filters $\pb_1,\pb_2$ and $\wt\pb_1,\wt\pb_2$,
respectively.

\begin{figure}[th] \label{bio:fig:ex21}
\centerline{\scalebox{0.75}{
\hbox{\epsfig{file=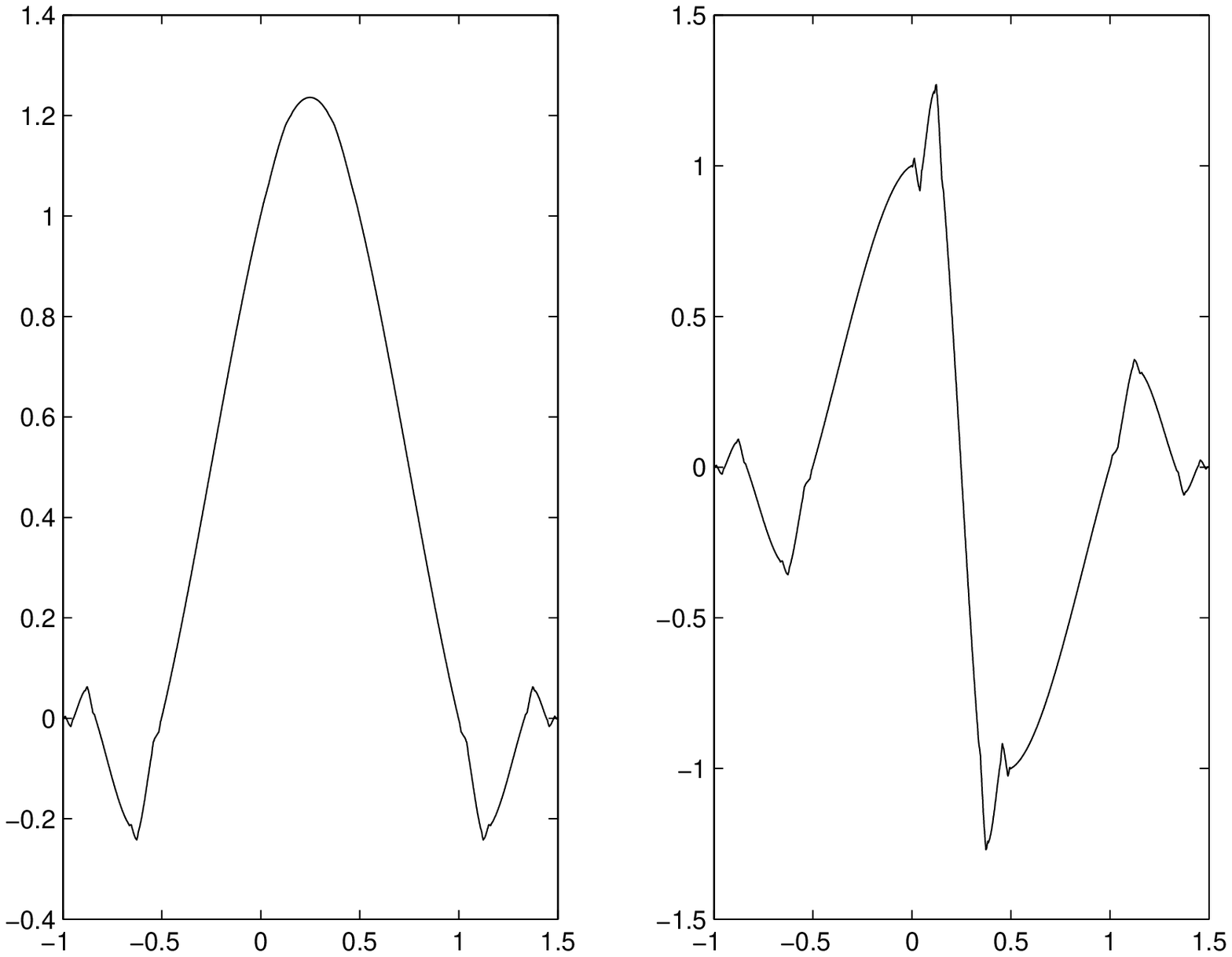,width=2.1in,height=1.5in}
\epsfig{file=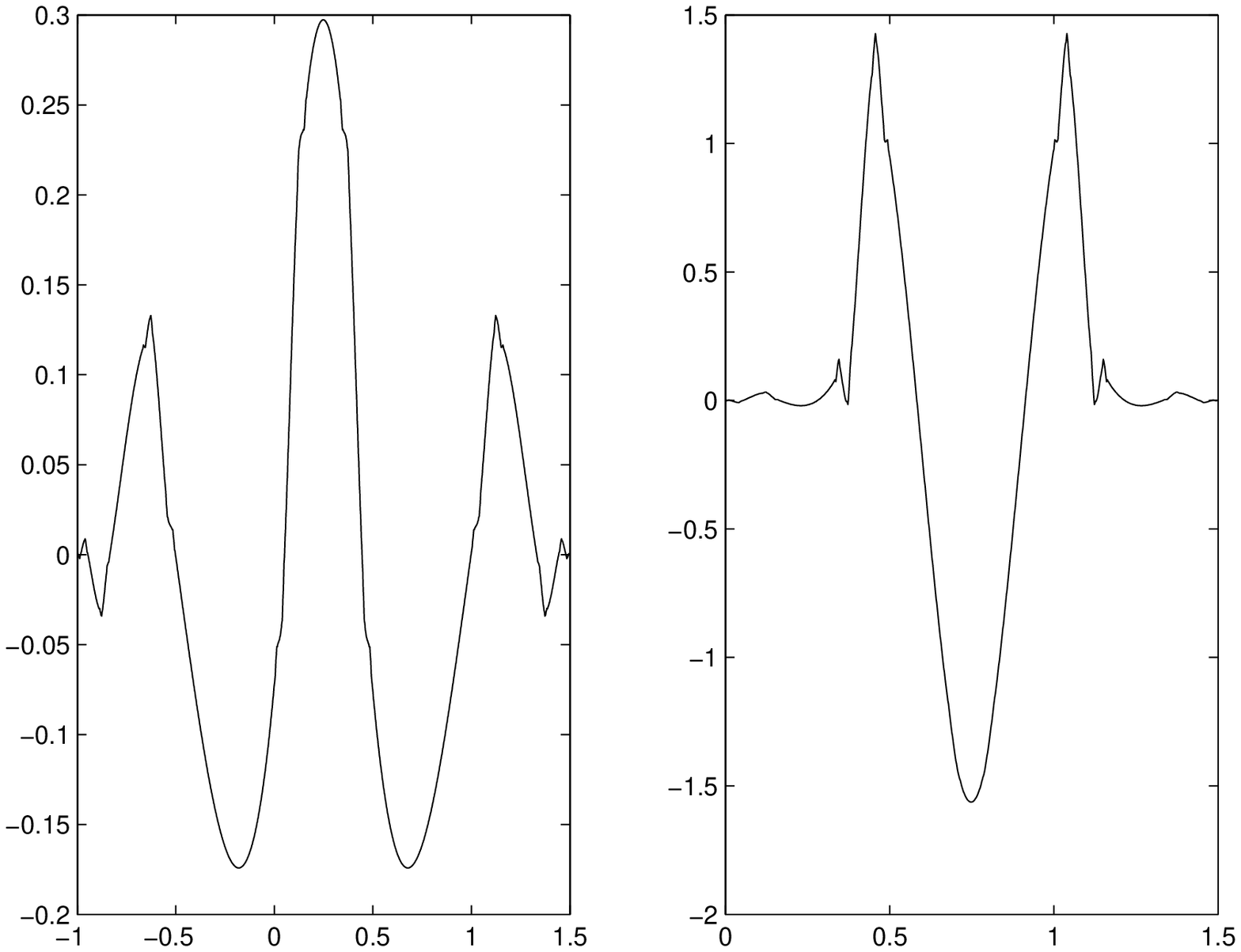,width=2.1in,height=1.5in}
\epsfig{file=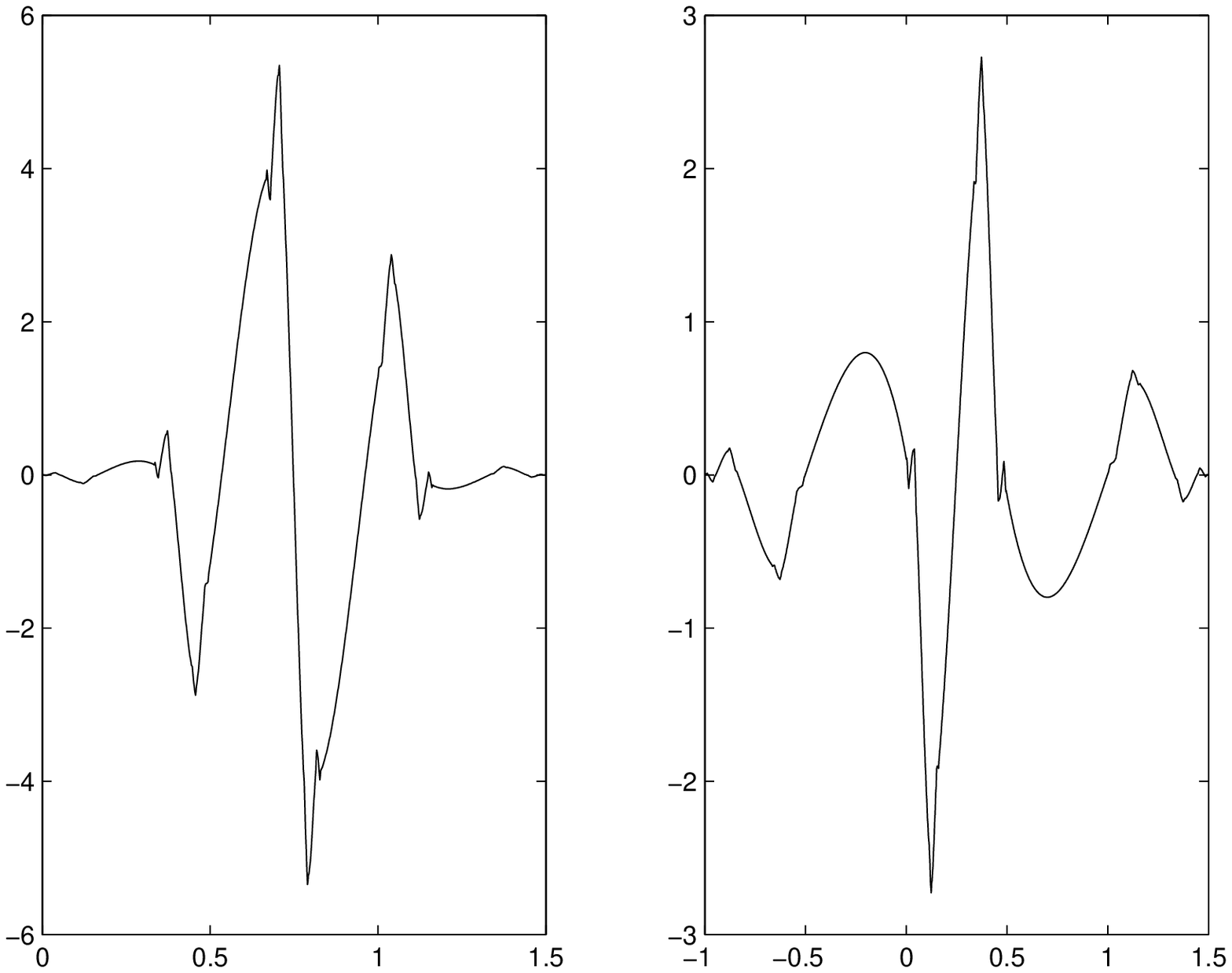,width=2.0in,height=1.5in} }}}
\centerline{\scalebox{0.75}{
\hbox{\epsfig{file=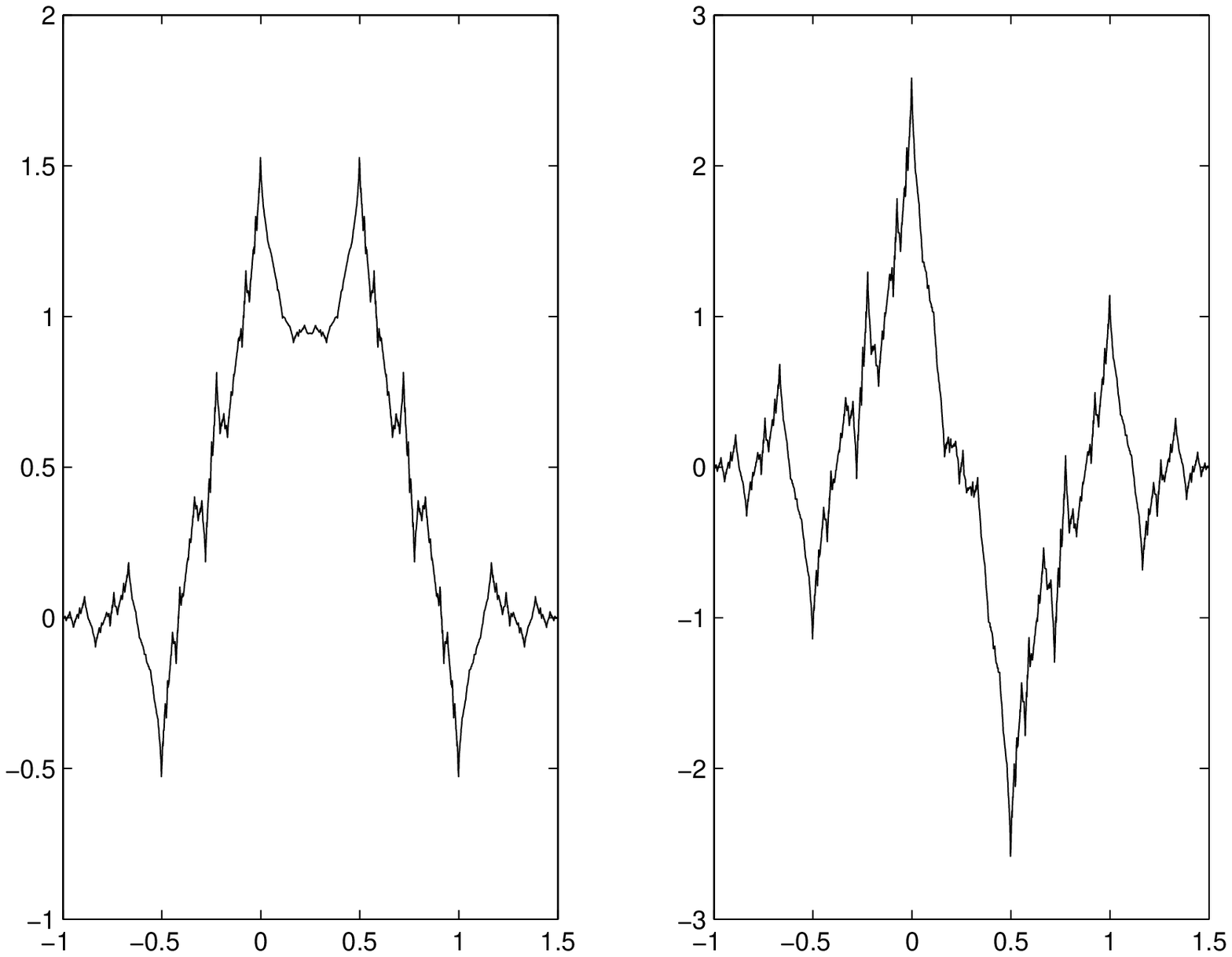,width=2.1in,height=1.5in}
\epsfig{file=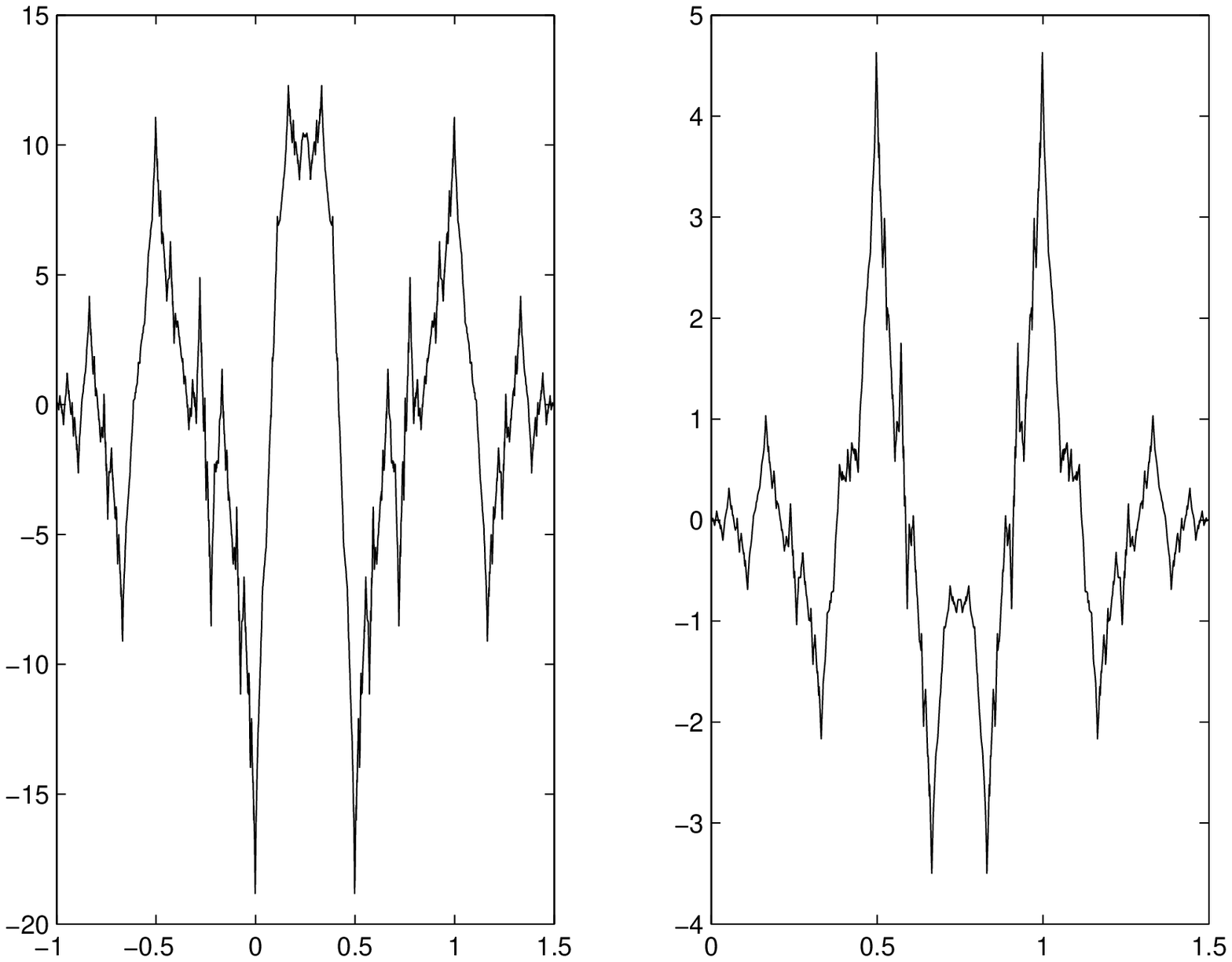,width=2.1in,height=1.5in}
\epsfig{file=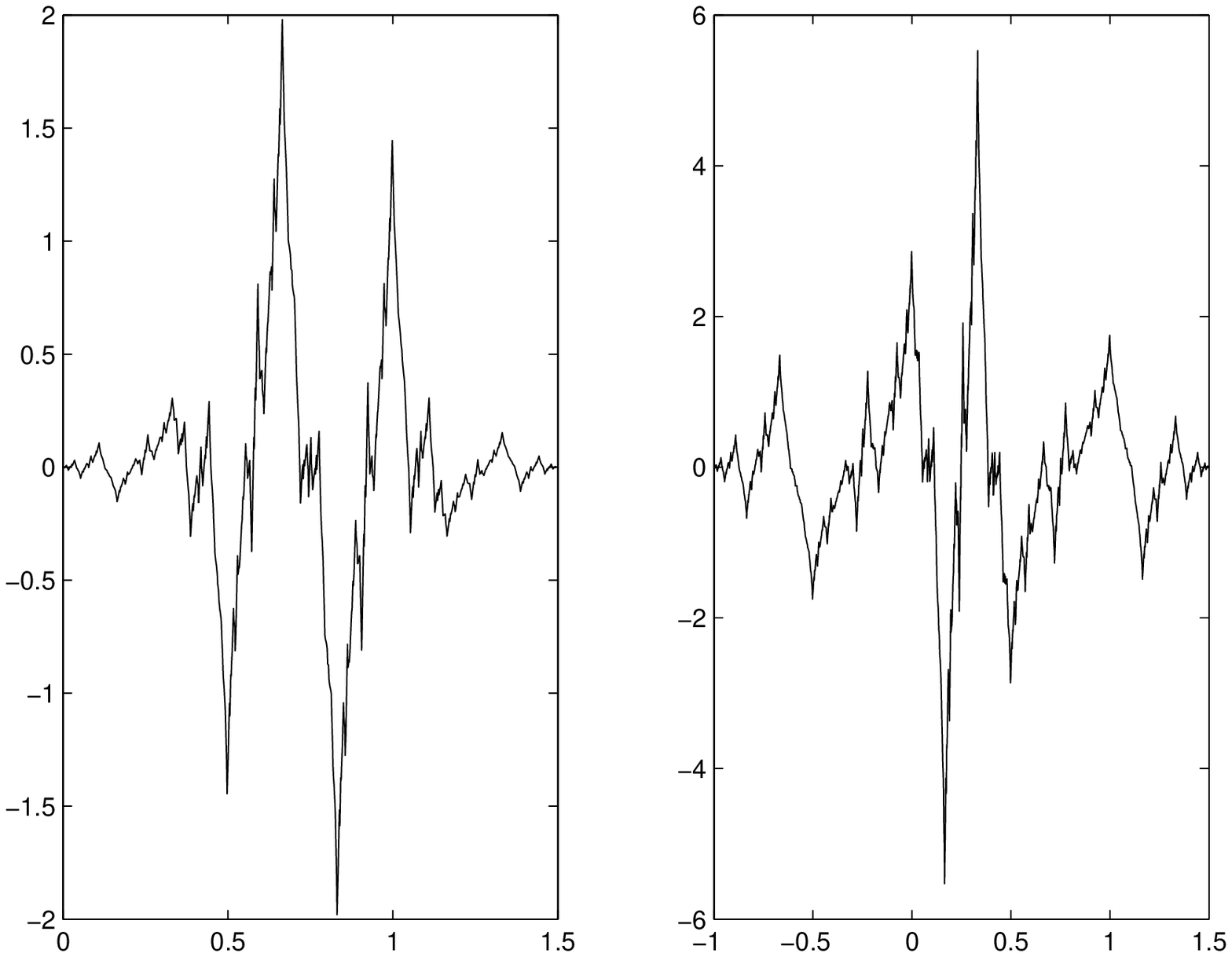,width=2.1in,height=1.5in} }}}
\begin{caption}
{The graphs of $\eta=[\eta_1,\eta_2]^T$,
$\zeta^1=[\zeta^1_1,\zeta^1_2]^T$, and
$\zeta^2=[\zeta^2_1,\zeta^2_2]^T$ (top, left to right), and
$\wt\eta=[\wt\eta_1,\wt\eta_2]^T$,
$\wt\zeta^1=[\wt\zeta^1_1,\wt\zeta^1_2]^T$, and
$\wt\zeta^2=[\wt\zeta^2_1,\wt\zeta^2_2]^T$ (bottom, left to right).}
\end{caption}
\end{figure}

\begin{figure}[th] \label{bio:fig:ex22}
\centerline{\scalebox{0.75}{
\hbox{\epsfig{file=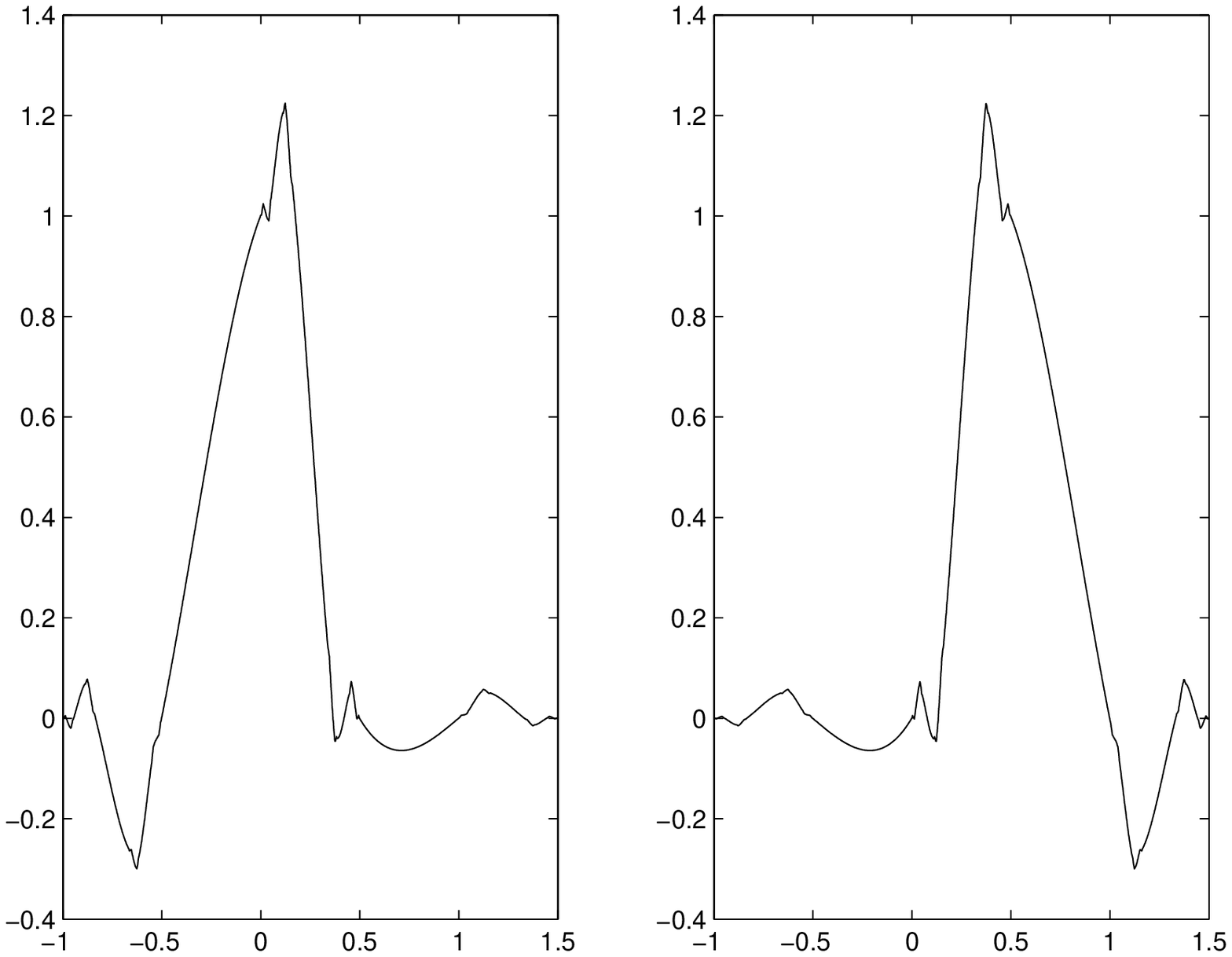,width=2.1in,height=1.5in}
\epsfig{file=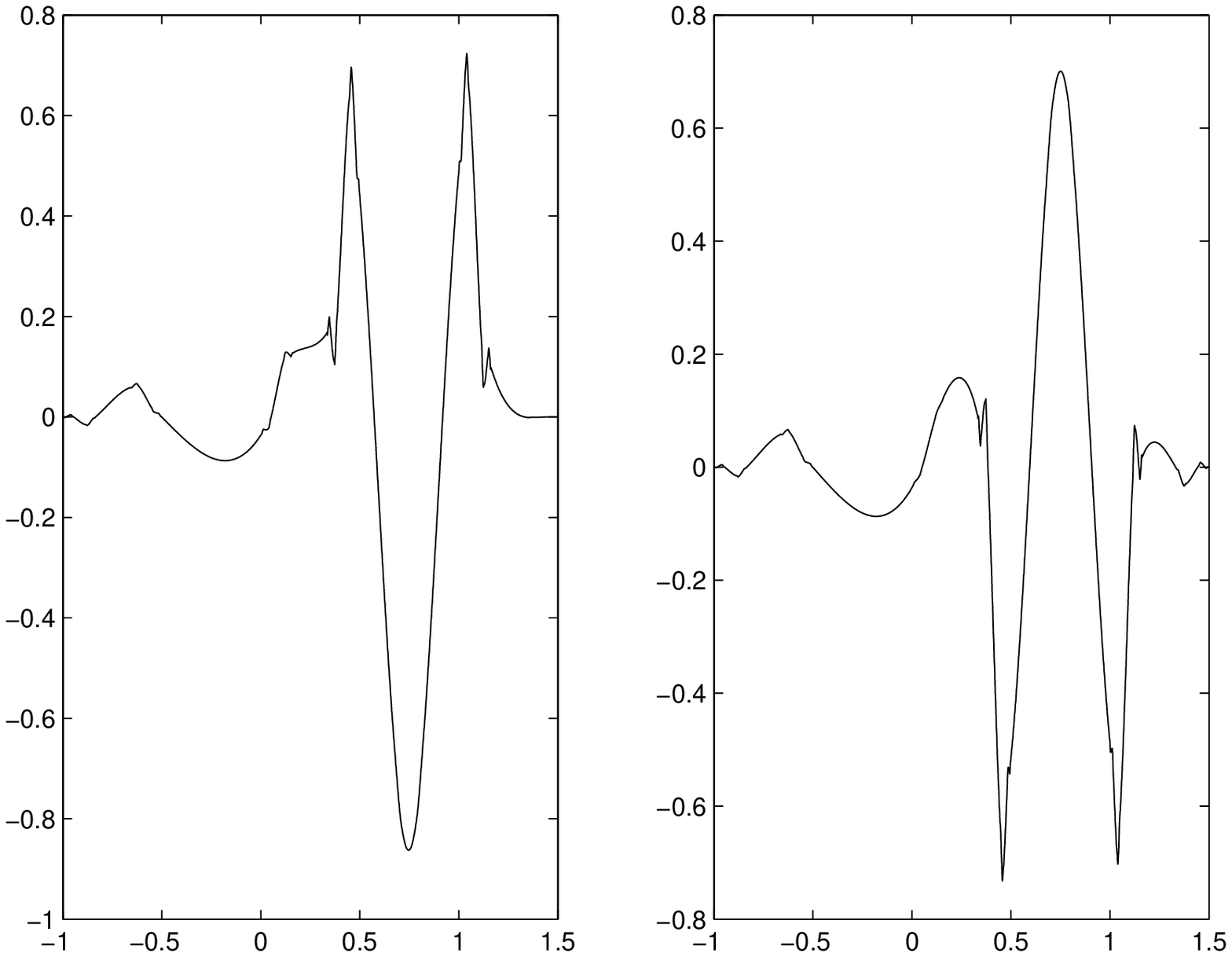,width=2.1in,height=1.5in}
\epsfig{file=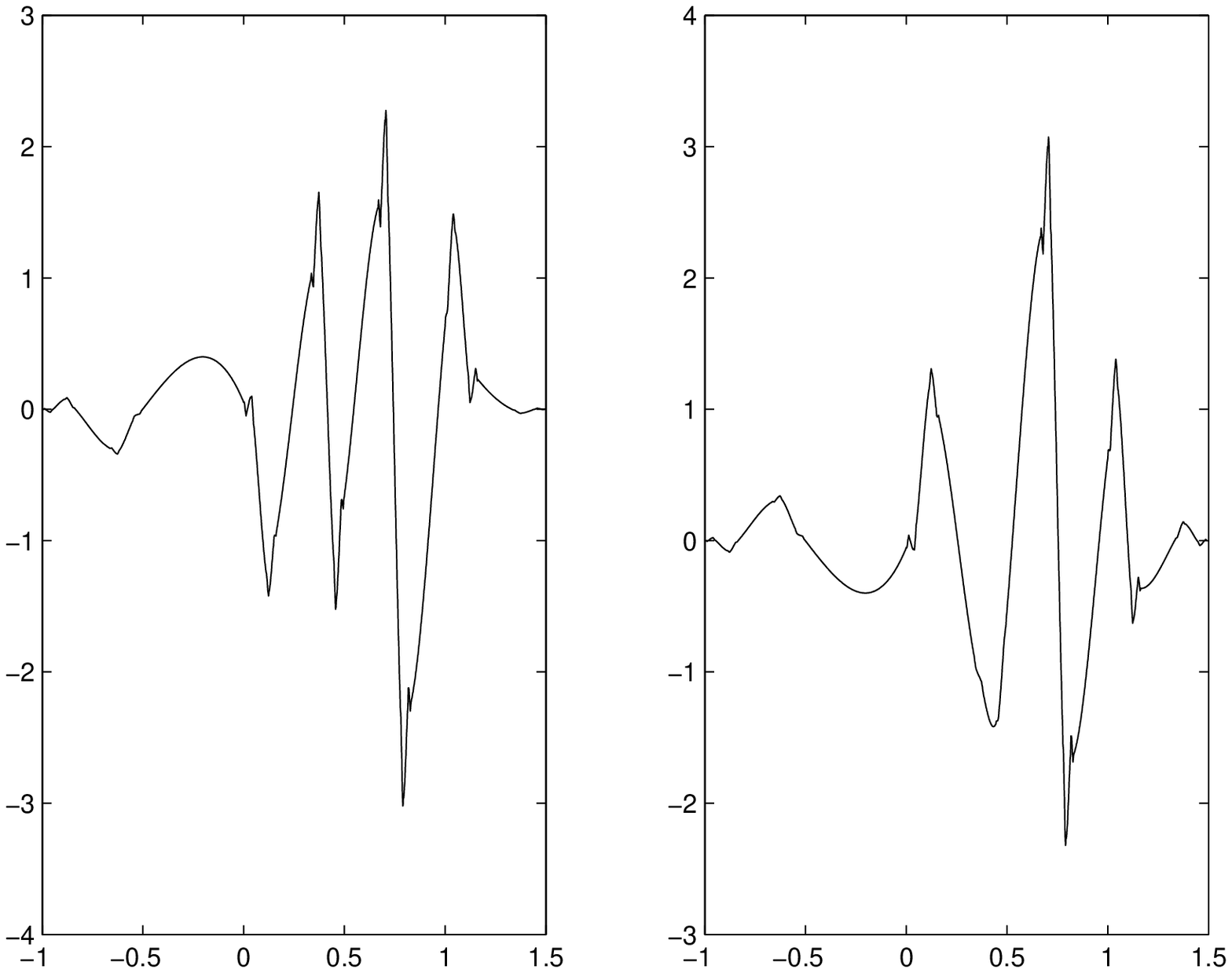,width=2.1in,height=1.5in} }}}
\centerline{\scalebox{0.75}{
\hbox{\epsfig{file=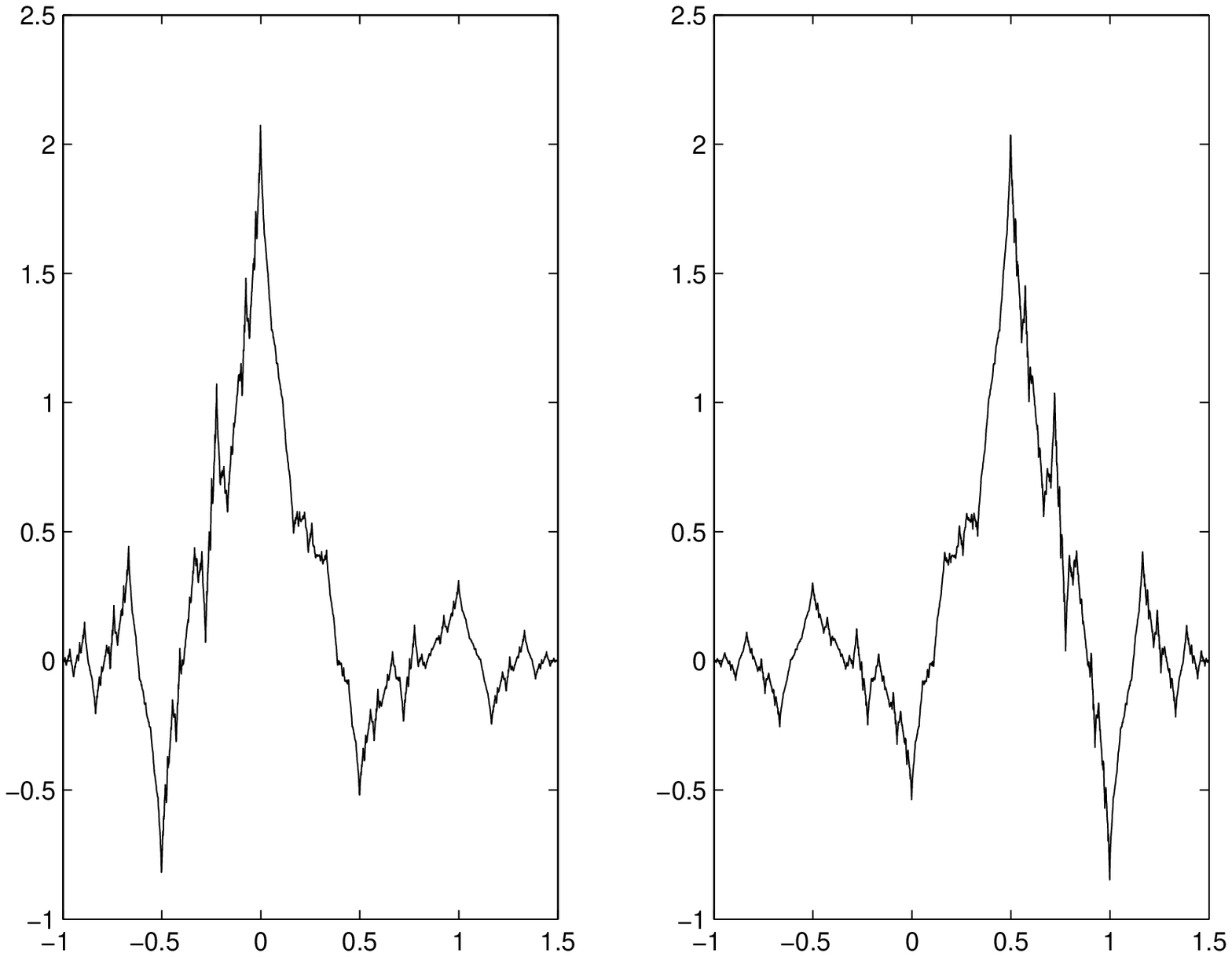,width=2.1in,height=1.5in}
\epsfig{file=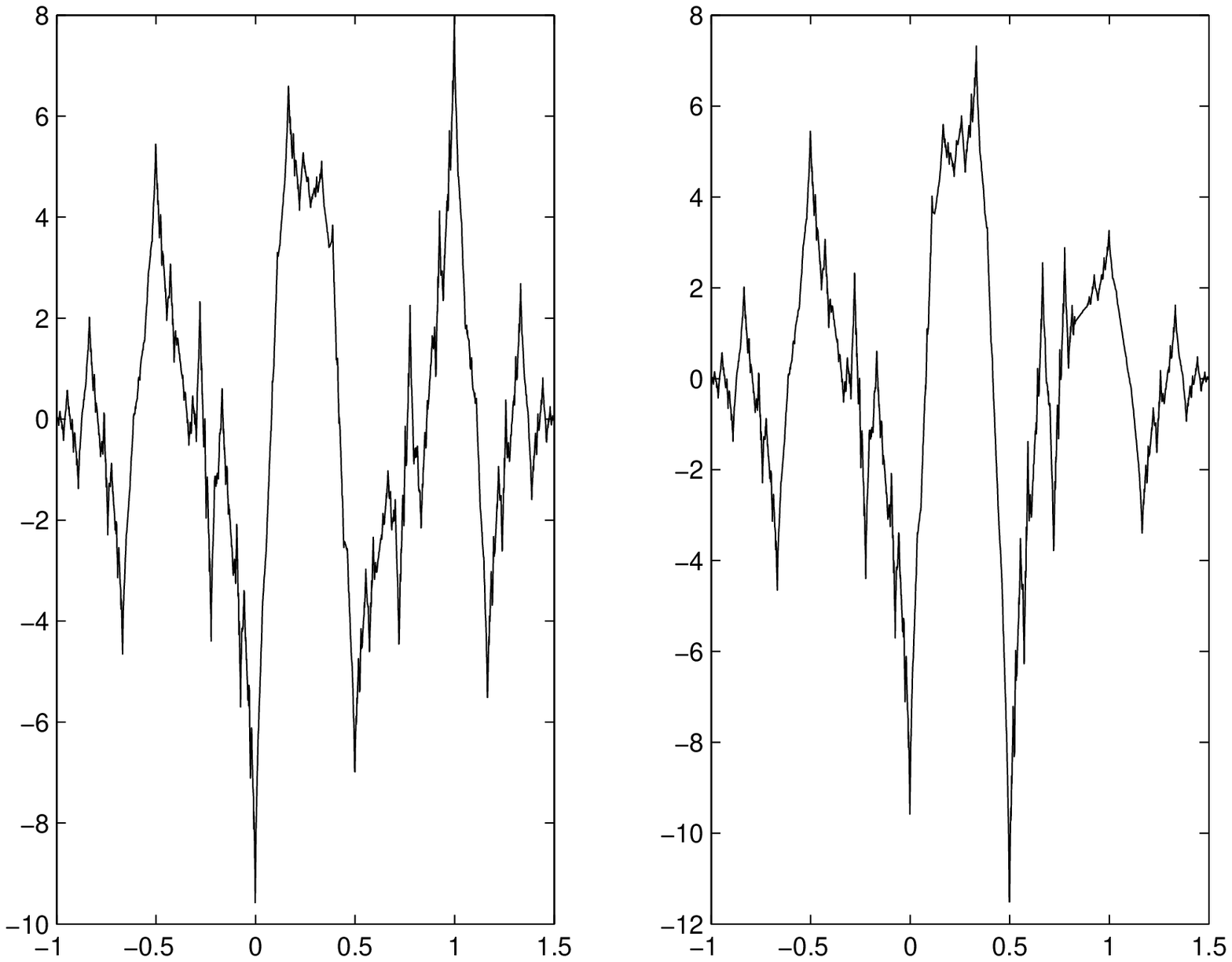,width=2.1in,height=1.5in}
\epsfig{file=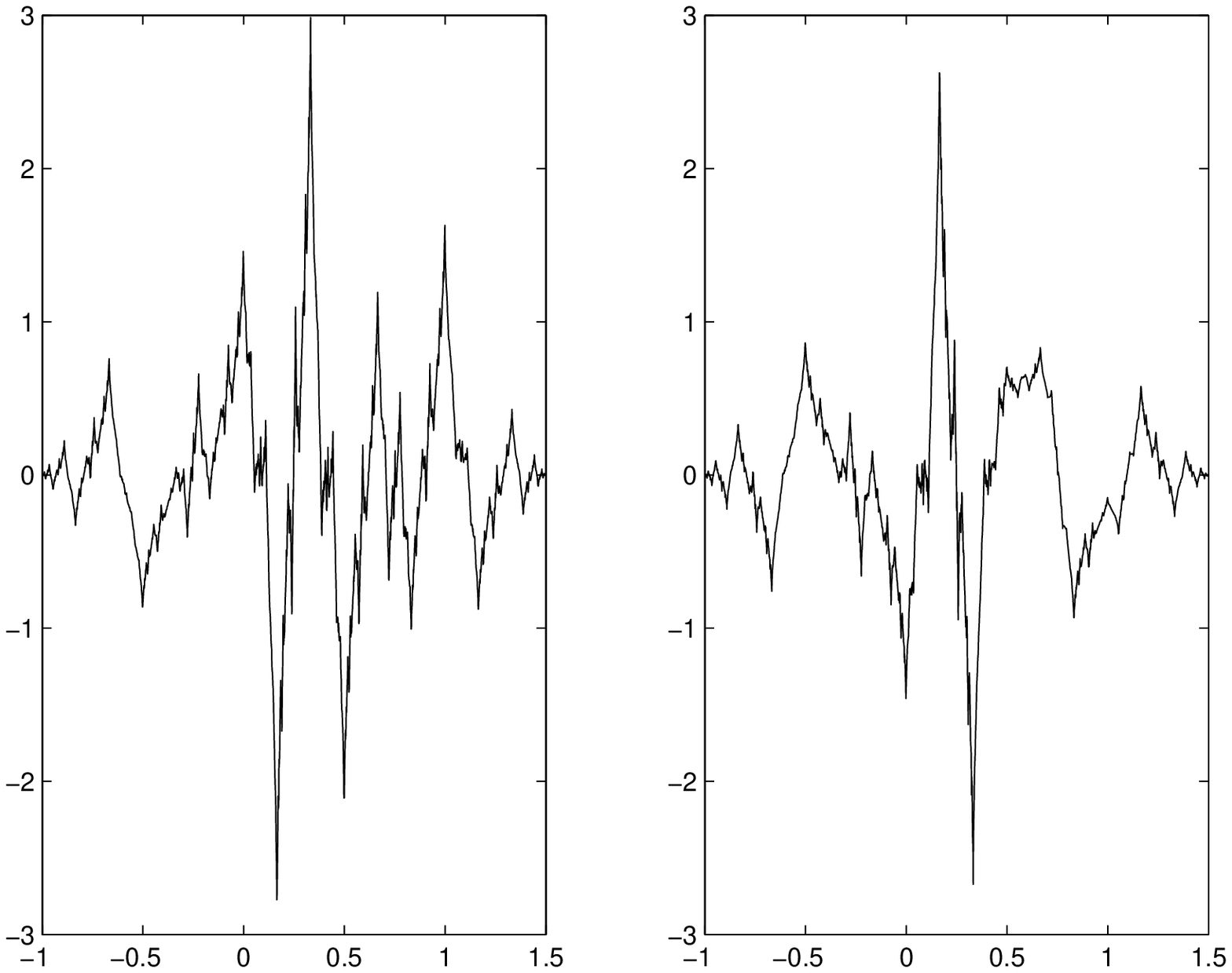,width=2.1in,height=1.5in} }}}
\begin{caption}
{The graphs of $\phi=[\phi_1,\phi_2]^T$,
$\psi^1=[\psi^1_1,\psi^1_2]^T$, and $\psi^2=[\psi^2_1,\psi^2_2]^T$
(top, left to right), and $\wt\phi=[\wt\phi_1,\wt\phi_2]^T$,
$\wt\psi^1=[\wt\psi^1_1,\wt\psi^1_2]^T$, and
$\wt\psi^2=[\wt\psi^2_1,\wt\psi^2_2]^T$ (bottom, left to right).}
\end{caption}
\end{figure}

}\end{example}

%
%
%
%
%
%
%
%
%
%
%

\section{Conclusions and Remarks}
In this paper, we study the  matrix extension problem with symmetry
for the biothogonal case. We obtain a result on representing a pair
of $r\times s$ biorthogonal matrices $(\pP,\wt\pP)$ having the same
compatibly symmetry and provide a step-by-step algorithm for
deriving a pair of $s\times s$ biorthogonal matrices from a given
pair of biorthogonal matrices $(\pP,\wt\pP)$. Our results show that
for the one row case ($r=1$), the support lengths of the extension
matrices can be controlled by the given pair of columns.
We apply our results in this paper to the derivation of symmetric
biortahogonal multiwavelets from a pair of dual $\df$-refinable
functions.


\begin{thebibliography}{10}
\bibitem{Cen.Cen:2008}
Y.~G.~Cen and L.~H.~Cen, \emph{Explicit construction of compactly
supported biorthogonal multiwavelets based on the matrix extension},
IEEE Int. Conference Neural Networks \& Signal Processing,
Zhenjiang, China, June 8$\sim$10, 2008.





\bibitem{Chui.Lian:1995}
C.~K.~Chui and J.~A.~Lian, \emph{Construction of compactly supported
symmetric and antisymmetric orthonormal wavelets with scale $=3$},
Appl. Comput. Harmon. Anal., 2 (1995), 21--51.




\bibitem{Cui:2005}
L.~H.~Cui, \emph{Some properties and construction of multiwavelets
related to different symmetric centers}, Math. Comput. Simul., 70
(2005), 69--89.


\bibitem{Cui:2005:AMC}
L.~H.~Cui, \emph{A method of construction for biorthogonal
multiwavelets system with $2r$ multiplicity}, Appl. Math. Comput.,
167 (2005), 901--918.


\bibitem{Daubechies:book}
I.~Daubechies, \emph{Ten lectures on wavelets},  CBMS-NSF Series in
Applied Mathematics, SIAM, Philadelphia, 1992.



\bibitem{Donovan.Geronimo.Hardin:1996}
G.~Donovan, J.~Geronimo, and D.~Hardin, \emph{Intertwining
multiresolution analysis and the construction of
piecewise-polynomial wavelets}, SIAM J. Math. Anal., 27 (1996),
1791--1815.



\bibitem{Geronimo.Hardin.Massopust:1994}
J.~Geronimo, D.~P.~Hardin, and P.~Massopust, \emph{Fractal functions
and wavelet expansions based on several scaling functions}, J.
Approx. Theory, 78 (1994), 373--401.




\bibitem{Goh.Yap:1998}
S.~S.~Goh and V.~B.~Yap, \emph{Matrix extension and biorthogonal
multiwavelet construction}, Lin.  Alg. Appl., 269 (1998), 139--157.




\bibitem{Han:1998}
B.~Han, \emph{Symmetric orthonormal scaling functions and wavelets
with dilation factor 4}, Adv. Comput. Math., 8 (1998), 221--247.

%


\bibitem{Han:2001:JAT}
B.~Han, \emph{Approximation properties and construction of {H}ermite
interpolants and biorthogonal multiwavelets},  J. Approx. Theory,
110 (2001), 18--53.


\bibitem{Han:2009:JFAA}
B.~Han, \emph{Matrix extension with symmetry and applications to
symmetric orthonormal complex $M$-wavelets}, J. Fourier Anal. Appl.,
15 (2009),  684--705.




\bibitem{Han.Kwon.Zhuang:2009}
B.~Han, S.~Kwon and X.~S.~Zhuang, \emph{Generalized interpolating
refinable function vectors},  J. Comput. Appl. Math.,  227 (2009),
254--270.



\bibitem{Han.Ji:2009}
B.~Han and H.~Ji, \emph{Compactly supported orthonormal complex
wavelets with dilation $4$ and symmetry}, Appl. Comput. Harmon.
Anal., 26 (2009), 422--431.







\bibitem{Han.Zhuang:MatExt}
B.~Han and X.~S.~Zhuang, \emph{Matrix extension with symmetry and
its application to filter banks}, preprnt,
http://arxiv.org/abs/1001.1117.



\bibitem{Ji.Shen:1999}
H.~Ji and Z.~Shen, \emph{Compactly supported (bi)orthogonal wavelets
generated by interpolatory refinable functions}, Adv. Comput. Math.,
11 (1999), 81--104.







\bibitem{Jiang:2001}
Q.~T.~Jiang, \emph{Symmetric paraunitary matrix extension and
parameterization of symmetric orthogonal multifilter banks}, SIAM J.
Matrix Anal. Appl., 22 (2001), 138--166.

%






\bibitem{Lawton.Lee.Shen:1996}
W.~Lawton, S.~L.~Lee, and Z.~Shen, \emph{An algorithm for matrix
extension and wavelet construction},  Math. Comp., 65 (1996),
723--737.



\bibitem{Petukhov:2004}
A.~Petukhov, \emph{Construction of symmetric orthogonal bases of
wavelets and tight wavelet frames with integer dilation factor},
Appl. Comput. Harmon. Anal., 17 (2004), 198--210.



\bibitem{Shen:1998}
Z.~Shen, \emph{Refinable function vectors}, SIAM J. Math. Anal., 29
(1998), 235--250.



\bibitem{Vaidyanathan:1992}
P.~P.~Vaidyanathan, \emph{Multirate systems and filter banks},
Prentice Hall, New Jersey, 1992.
 (1984), 513--518.






\bibitem{Youla.Pickel:1984}
D.~C.~Youla and P.~F.~Pickel, \emph{The Quillen-Suslin theorem and
the structure of $n$-dimesional elementary polynomial matrices},
IEEE Trans. Circ. Syst., 31 (1984), 513--518.


\end{thebibliography}
\end{document}